\documentclass[final,leqno]{siamltex}
\pagestyle{myheadings}

\setlength{\hoffset}{0.7in}


\usepackage{amsmath}
\usepackage{epsfig}
\usepackage{graphicx}
\usepackage{amssymb}

\usepackage{color}

\newtheorem{remark}{Remark}[section]


\def\lam{{\lambda}}

\def\Ome{{\Omega}}

\def\Del{{\Delta}}
\def\nab{{\nabla}}
\def\vepsi{{\varepsilon}}
\def\p{{\partial}}
\def\reff#1{\eqref{#1}}
\def\norm#1#2{\Vert\,#1\,\Vert_{#2}}

\def\vepsi{\varepsilon}

\def\cT{{\mathcal T}}

\def\no{{\nonumber}}

\def\div{{\mbox{\rm div\,}}}

\def\p{{\partial}}

\def\nab{\nabla}
\def\Ome{\Omega}
\def\lam{\lambda}
\def\Del{\Delta}

\newcommand{\bRM}{\mathbf{RM}}
\newcommand{\br}{\mathbf{r}}

\def\ba{\mathbf{a}}
\def\bb{\mathbf{b}}

\def\bE{\mathbf{E}}

\def\bC{\mathbf{C}}
\def\bbf{\mathbf{f}}
\def\bu{\mathbf{u}}

\def\bv{\mathbf{v}}
\def\bw{\mathbf{w}}

\def\bg{\mathbf{g}}
\def\bn{\mathbf{n}}
\def\bH{\mathbf{H}}
\def\bV{\mathbf{V}}
\def\bL{\mathbf{L}}
\def\bP{\mathbf{P}}

\def\bV{\mathbf{V}}

\def\bX{\mathbf{X}}

\def\R{\mathbb{R}}

\def\bx{{\bf x}}
\begin{document}


\title{Multiphysics Finite Element Methods for a Poroelasticity Model\footnote{Last
update: \today}}

\author{
Xiaobing Feng\thanks{
Department of Mathematics, The University of
Tennessee, Knoxville, TN 37996, U.S.A. ({\tt xfeng@math.utk.edu}).
The work of this author was partially supported by the NSF grants DMS-1016173 and DMS-1318486.}
\and 
Zhihao Ge\thanks{Institute of Applied Mathematics, School of Mathematics and Information 
Sciences, Henan University, Kaifeng, Henan Province 475004, P. R. China ({\tt zhihaoge@gmail.com}). 
The work of this author was partially supported by the National Natural Science Foundation of 
China grant \#10901047.}
\and
Yukun Li\thanks{Department of Mathematics, The University of
Tennessee, Knoxville, TN 37996, U.S.A. ({\tt yli@math.utk.edu}). The
work of this author was partially supported by the NSF grants DMS-1016173 and DMS-1318486.}
%
}

\maketitle


\setcounter{page}{1}



\begin{abstract}
This paper concerns with finite element approximations of a quasi-static poroelasticity model 
in displacement-pressure formulation which describes the dynamics of poro-elastic materials 
under an applied mechanical force on the boundary. To better describe the multiphysics 
process of deformation and diffusion 
for poro-elastic materials, we first present a reformulation of the original model by introducing 
two pseudo-pressures, one of them is shown to satisfy a diffusion equation,   
we then propose a time-stepping algorithm which decouples (or couples) the 
reformulated PDE problem at each time step 
into two sub-problems, one of which is a generalized Stokes problem for the displacement vector field 
(of the solid network of the poro-elastic material) along with one pseudo-pressure field
and the other is a diffusion problem for the other pseudo-pressure field (of the solvent 
of the material). To make this multiphysics approach feasible numerically, two critical 
issue must be resolved: the first one is the uniqueness 
of the generalized Stokes problem and the other is to find a good boundary condition for the diffusion 
equation so that it also becomes uniquely solvable. To address the first issue, we discover 
certain conserved quantities for the PDE solution which provide ideal candidates for a needed 
boundary condition for the pseudo-pressure field. The solution to the second issue is to use 
the generalized Stokes problem to generate a boundary condition for the diffusion problem. 
A practical advantage of the time-stepping algorithm allows one to use any convergent Stokes 
solver (and its code) together with any convergent diffusion equation solver (and its code) 
to solve the poroelasticity model. In the paper, the Taylor-Hood mixed finite element method 
combined with the $P_1$-conforming finite element method is used as an example to demonstrate 
the viability of the proposed multiphysics approach. It is proved that the solutions of the fully 
discrete finite element methods fulfill a discrete energy law which mimics the differential 
energy law satisfied by the PDE solution and converges optimally in the energy norm. 
Moreover, it is showed that the proposed formulation also has a built-in mechanism to overcome 
so-called ``locking phenomenon" associated with the numerical approximations of the poroelasticity model. 
Numerical experiments are presented to show the performance of the proposed approach and methods  
and to demonstrate the absence of ``locking phenomenon" in our numerical experiments.
The paper also presents a detailed PDE analysis for the poroelasticity model, especially, 
it is proved that this model converges to the well-known Biot's consolidation model from soil
mechanics as the constrained specific storage coefficient tends to zero. As a result, the proposed
approach and methods are robust under such a limit process.
\end{abstract}

\begin{keywords}
Poroelasticity, deformation and diffusion, generalized Stokes equations, finite element methods,
inf-sup condition, fully discrete schemes, error estimates.
\end{keywords}

\begin{AMS}
65M12, 
65M15, 
65M60, 
\end{AMS}

\pagestyle{myheadings} 
\thispagestyle{plain} 
\markboth{XIAOBING FENG, ZHIHAO GE AND YUKUN LI}{MULTIPHYSICS FINITE ELEMENT METHODS FOR 
A POROELASTICITY MODEL}


\section{Introduction}\label{sec-1}
A poroelastic material (or medium) is a fluid-solid interaction system at pore scale and 
poromechanic is a branch of continuum mechanics and acoustics that studies the behavior of 
fluid-saturated porous materials. If the solid is an elastic material, then the subject
of the study is known as poroelasticity. Moreover, the elastic material may be governed by linear or 
nonlinear constitutive law, which then leads respectively to linear and nonlinear poroelasticity.
Examples of poroelastic materials include soil, polymer gels, and medicine pills, just name a 
few. Poroelastic materials exhibit an important state of matter found in a wide variety
of mechanical, biomedical and chemical systems (cf. \cite{coussy04,de86,tf79,terzaghi43,yd04b} and the 
references therein). They also possess some fascinating properties, in particular, they display 
thixotropy which means that they become fluid when agitated, but resolidify when resting. 
In general, the behavior of a poroelastic material is described by a multiphysics fluid-solid
interaction process at pore scale. Unlike standard (macroscopic) fluid-solid interaction systems, 
some physical phenomena of the multiphysics process of the poroelastic material may not be explicitly 
revealed in its mathematical model, instead, they are hidden in the model. This is indeed 
the case for the poroelasticity model to be studied in this paper.

This paper considers a general quasi-static model of linear poroelasticity which is broad enough 
to contain the well-known Biot's consolidation model from soil mechanics (cf. \cite{murad,lynn07})
and the Doi's model for polymer gels (cf. \cite{fh10,yd04b}). The quasi-static feature is due to  
the assumption that the acceleration of the solid (described by the second order time derivative 
of the displacement vector field) is assumed to be negligible.  We refer the reader to \cite{coussy04,
pw07,terzaghi43} for a derivation of the model and to \cite{schowalter00} for its mathematical analysis. 
When the parameter $c_0$, called the constrained specific storage coefficient, vanishes in the 
model, it reduces into the above mentioned Boit's model and Doi's model arising from two distinct 
applications. Their mathematical analysis can be found in \cite{fh10} and their finite element
numerical approximations based on two very different approaches were carried out in
\cite{murad,fh10}, respectively.  In \cite{pw07,pw07b} the authors proposed and analyzed 
a semi-discrete and a fully discrete mixed finite element method which simultaneously 
approximate the pressure and its gradient along with the displacement vector field. Since the 
implicit Euler scheme is used for the time discretization, a combined linear system 
must be solved at each time step.  It is observed in the numerical tests that the proposed 
fully discrete mixed finite method may exhibit a ``locking phenomenon" in the sense that 
the computed pressure oscillates and its accuracy deteriorate when a rapidly changed 
initial pressure is given, as explained in the \cite{pw09} that such a ``locking phenomenon"
is caused by the difficulty of satisfying the nearly divergence-free condition of
$\bu$ for very small time $t>0$. 

The goal of this paper is to present a multiphysics approach for approximating the 
poroelasticity model. A key idea of this approach is to derive a multiphysics 
reformulation for the original model which clearly reveals the underlying multiple physics 
process (i.e., the deformation and diffusion) of the pore-scale fluid-solid 
interaction system. To the end, two pseudo-pressures are introduced, one of them is shown 
to satisfy a diffusion equation, while the displacement vector field along with the other 
pseudo-pressure variable is shown to satisfy a generalized Stokes system. It should be noted
that the original pressure is eliminated in the reformulation, thus, it is not 
approximated as a primary (unknown) variable, instead, it is computed as a linear 
combination of the two pseudo-pressures. Based on this multiphysics reformulation
we then propose a time-stepping algorithm which decouples (or couples) the reformulated 
PDE problem at each time step into two sub-problems, a generalized Stokes problem for 
the displacement vector field along with a pseudo-pressures and a diffusion problem
for another pseudo-pressure field. 
To make this multiphysics approach feasible numerically, two critical issue must be 
resolved: the first one is the uniqueness of the generalized Stokes problem and the 
other is to find a good boundary condition for the diffusion equation so that it also 
becomes uniquely solvable. To overcome these difficulties, we discover certain conserved 
quantities for the PDE solution which can be imposed as needed boundary conditions
for the subproblems. 
Moreover, we demonstrate that, regardless the choice of discretization methods, the 
proposed formulation has a built-in mechanism to overcome the ``locking phenomenon" 
associated with numerical approximations of the poroelasticity model.
 
The remainder of this paper is organized as follows. In Section \ref{sec-2} we present a
complete PDE analysis of the poroelasticity model which emphasizes the energy law of 
the underlying model. Several conserved quantities are derived for the PDE solution.
Moreover, it is proved that the poroelasticity model converges to the Biot's consolidation
model as the constrained specific storage coefficient $c_0\to 0$.
In Section \ref{sec-3} we propose and analyze some fully discrete finite element methods 
based on the above mentioned multiphysics reformulation. Both coupled and 
decoupled time-stepping are considered and compared. The Taylor-Hood mixed finite element method
combined with the $P_1$-conforming finite element method is chosen as an example for spatial
discretization. It is proved that the solutions of the fully discrete finite element methods
fulfill a discrete energy law which mimics the differential energy law satisfied by the 
PDE solution. Optimal order error estimates
in the energy norm are also established. Finally, in Section \ref{sec-4}, several 
benchmark numerical experiments are provided to show the performance of the proposed 
approach and methods, and to demonstrate the absence of ``locking phenomenon" in 
our numerical experiments.

\section{PDE model and its analysis}\label{sec-2}

\subsection{Preliminaries} \label{sec-2.1}
$\Omega \subset \R^d \,(d=1,2,3)$ denotes a bounded polygonal domain with the boundary 
$\p\Ome$. The standard function space notation is adopted in this paper, their 
precise definitions can be found in \cite{bs08,cia,temam}.
In particular, $(\cdot,\cdot)$ and $\langle \cdot,\cdot\rangle$
denote respectively the standard $L^2(\Ome)$ and $L^2(\p\Ome)$ inner products.
For any Banach space $B$, we let $\mathbf{B}=[B]^d$,
and use $\mathbf{B}^\prime$ to denote its dual space. In particular,
we use $(\cdot,\cdot)_{\small\rm dual}$
and $\langle \cdot,\cdot \rangle_{\small\rm dual}$
to denote the dual product on $(\bH^1(\Ome))' \times \bH^1(\Ome)$,
and $\norm{\cdot}{L^p(B)}$ is a shorthand notation for
$\norm{\cdot}{L^p((0,T);B)}$.

We also introduce the function spaces
\begin{align*}
&L^2_0(\Omega):=\{q\in L^2(\Omega);\, (q,1)=0\}, \qquad \bX:= \bH^1(\Ome). 
\end{align*}
It is well known \cite{temam} that the following so-called
inf-sup condition holds in the space $\bX\times L^2_0(\Ome)$:
\begin{align}\label{e2.0}
\sup_{\bv\in \bX}\frac{(\div \bv,\varphi)}{\norm{\nab \bv}{L^2(\Ome)}}
\geq \alpha_0 \norm{\varphi}{L^2(\Ome)} \qquad \forall
\varphi\in L^2_0(\Ome),\quad \alpha_0>0.
\end{align}

Let 
\[
\bRM:=\{\br:=\ba+\bb \times x;\, \ba, \bb, x\in \R^d\}
\]
denote the space of infinitesimal rigid motions.
It is well known \cite{brenner,gra,temam} that $\bRM$ is the kernel of
the strain operator $\vepsi$, that is, $\br\in \bRM$ if and only if
$\vepsi(\br)=0$. Hence, we have
\begin{align}
\vepsi(\br)=0,\quad \div \br=0 \qquad\forall \br\in \bRM. \label{e4.100}
\end{align}

Let $\bL^2_\bot(\p\Ome)$ and $\bH^1_\bot(\Ome)$ denote respectively the 
subspaces of $\bL^2(\p\Ome)$ and $\bH^1(\Ome)$ which are orthogonal to $\bRM$, that is,
\begin{align*}
&\bH^1_\bot(\Ome):=\{\bv\in \bH^1(\Ome);\, (\bv,\br)=0\,\,\forall \br\in \bRM\},
\\
&\bL^2_\bot(\p\Ome):=\{\bg\in \bL^2(\p\Ome);\,\langle \bg,\br\rangle=0\,\,
\forall \br\in \bRM \}.
\end{align*}

It is well known \cite{dautray} that there exists a constant $c_1>0$ such that
\[
\inf_{\br\in \bRM}\|\bv+\br\|_{L^2(\Ome)}
\le c_1\|\vepsi(\bv)\|_{L^2(\Ome)} \qquad\forall \bv\in\bH^1(\Ome).
\]
Hence, for each $\bv\in \bH^1_\bot(\Ome)$ there holds
\begin{align}\label{e4.1+}
\|\bv\|_{L^2(\Ome)}=\inf_{\br\in \bRM} \sqrt{\|\bv+\br\|_{L^2(\Ome)}^2-\|\br\|_{L^2(\Ome)}^2 }
\le c_1\|\vepsi(\bv)\|_{L^2(\Ome)},
\end{align}
which and the well-known Korn's inequality \cite{dautray}
yield that for some $c_2>0$
\begin{align} \label{e4.1+0}
\|\bv\|_{H^1(\Ome)} &\le c_2[\|\bv\|_{L^2(\Ome)}+\|\vepsi(\bv)\|_{L^2(\Ome)}]\\
&\le c_2(1+c_1)\|\vepsi(\bv)\|_{L^2(\Ome)} \qquad\forall \bv\in\bH^1_\bot(\Ome).\no
\end{align}

By Lemma 2.1 of \cite{brenner} we know that for any $q\in L^2(\Ome)$, there
exists $\bv\in \bH^1_\bot(\Ome)$ such that $\div \bv=q$ and
$\|\bv\|_{H^1(\Ome)} \leq C\|q\|_{L^2(\Ome)}$. An immediate consequence of
this lemma is that there holds the following alternative version of the
inf-sup condition:
\begin{align}\label{e2.0a}
\sup_{\bv\in \bH^1_\bot(\Ome)}\frac{(\div \bv,\varphi)}{\norm{\nab \bv}{L^2(\Ome)}}
\geq \alpha_1 \norm{\varphi}{L^2(\Ome)} \qquad \forall
\varphi\in L^2_0(\Ome),\quad \alpha_1>0.
\end{align}

Throughout the paper, we assume $\Omega \subset \R^d$ is a bounded
polygonal domain such that $\Delta: H^1_0(\Omega) \cap H^2(\Omega)
\rightarrow L^2(\Omega)$ is an isomorphism
(cf. \cite{dauge88,gra}). In addition, $C$ is used to
denote a generic positive (pure) constant which may be different 
in different places. 

\subsection{PDE model and its  multiphysics reformulation} \label{sec-2.2}
The quasi-static poroelasticity model to be studied in this paper is given by (cf. \cite{pw07})
\begin{alignat}{2} \label{e1.1}
-\div \sigma(\bu) + \alpha \nab p &= \bbf
&&\qquad \mbox{in } \Ome_T:=\Ome\times (0,T)\subset \mathbf{\R}^d\times (0,T),\\
(c_0p+\alpha \div \bu)_t + \div \bv_f &=\phi &&\qquad \mbox{in } \Ome_T,
\label{e1.2}
\end{alignat}
where
\begin{align} \label{e1.3}
\sigma(\bu) &:= \mu\vepsi(\bu)+\lam \div\bu I,\qquad
\vepsi(\bu):=\frac12\bigl(\nab\bu+\nab\bu^T\bigr),\\
\bv_f &:= -\frac{K}{\mu_f} \bigl(\nab p -\rho_f \bg \bigr). \label{e1.4}
\end{align}
Where $\bu$ denotes the displacement vector of the solid and
$p$ denotes the pressure of the solvent. $\bbf$ is the body force.
$I$ denotes the $d\times d$ identity matrix and $\vepsi(\bu)$ is
known as the strain tensor. The parameters in the model are
Lam\'e constants $\lam$ and $\mu$, the (symmetric) permeability
tensor $K$, the solvent viscosity $\mu_f$, Biot-Willis constant
$\alpha$, and the constrained specific storage coefficient
$c_0$.  In addition, $\sigma(\bu)$ is called the (effective) stress tensor.
$\widehat{\sigma}(\bu,p):=\sigma(\bu)-\alpha p I$ is the total stress
tensor.  $\bv_f$ is the volumetric solvent flux and \reff{e1.4} is the
well-known Darcy's law. We assume that $\rho_f\not\equiv 0$, which 
is a realistic assumption.

To close the above system, suitable boundary and initial conditions must
also be prescribed. The following set of boundary and initial
conditions will be considered in this paper:
\begin{alignat}{2} \label{e1.4a}
\widehat{\sigma}(\bu,p)\bn=\sigma(\bu)\bn-\alpha p \bn &= \bbf_1
&&\qquad \mbox{on } \p\Ome_T:=\p\Ome\times (0,T),\\
\bv_f\cdot\bn= -\frac{K}{\mu_f} \bigl(\nab p -\rho_f \bg \bigr)\cdot \bn
&=\phi_1 &&\qquad \mbox{on } \p\Ome_T, \label{e1.4b} \\
\bu=\bu_0,\qquad p&=p_0 &&\qquad \mbox{in } \Ome\times\{t=0\}. \label{e1.4c}
\end{alignat}

We note that in some engineering literature the second Lam\'e constant
$\mu$ is also called the {\em shear modulus} and denoted by $G$, and
$B:=\lam +\frac23 G$ is called the {\em bulk modulus}. $\lam, \mu$ and $B$
are computed from the {\em Young's modulus} $E$ and the {\em Poisson ratio}
$\nu$ by the following formulas:
\[
\lam=\frac{E\nu}{(1+\nu)(1-2\nu)},\qquad \mu=G=\frac{E}{2(1+\nu)}, \qquad
B=\frac{E}{3(1-2\nu)}.
\]

Unlike the existing approaches in the literature \cite{pw07,murad}, 
in this paper we will not approximate the above original model directly,
instead, we first derive a (multiphysics) reformulation for the model, we then 
approximate the reformulated model. This is a key idea of this paper and
it will be seen in the later sections that this new approach is advantageous.  
To the end, we introduce new variables
\[
q:=\div \bu,\qquad \eta:=c_0p+\alpha q,\qquad \xi:=\alpha p -\lam q.
\]
It is easy to check that
\begin{align}\label{e1.5}
p=\kappa_1 \xi + \kappa_2 \eta, \qquad q=\kappa_1 \eta-\kappa_3 \xi,
\end{align}
where
\begin{align}\label{e1.6}
\kappa_1:= \frac{\alpha}{\alpha^2+\lam c_0},
\quad \kappa_2:=\frac{\lam}{\alpha^2+\lam c_0}, \quad
\kappa_3:=\frac{c_0}{\alpha^2+\lam c_0}.
\end{align}

Then \reff{e1.1}--\reff{e1.4} can be written as
\begin{alignat}{2} \label{e1.7}
-\mu\div\vepsi(\bu) + \nab \xi &= \bbf &&\qquad \mbox{in } \Ome_T,\\
\kappa_3\xi +\div \bu &=\kappa_1\eta &&\qquad \mbox{in } \Ome_T, \label{e1.8}\\
\eta_t - \frac{1}{\mu_f} \div[K (\nab (\kappa_1 \xi + \kappa_2 \eta)-\rho_f\bg)]&=\phi
&&\qquad \mbox{in } \Ome_T, \label{e1.9}
\end{alignat}
where $p$ and $q$ are related to $\xi$ and $\eta$
through the algebraic equations in \reff{e1.5}

It is now clear that $(\bu, \xi)$ satisfies a generalized Stokes problem
and $\eta$ satisfies a diffusion problem. This new formulation reveals 
the underlying deformation and diffusion multiphysics process which 
occurs in the poroelastic material. In particular, the diffusion part 
of the process is hidden in the original formulation but is apparent 
in the new formulation.  To make use the above reformulation for 
computation, we need to address a crucial issue of the uniqueness for
the generalized Stokes problem and the diffusion problem after they are
decoupled. The difficulty will be overcome by discovering some 
invariant quantities for the solution of the PDE model and using 
them to impose some appropriate boundary conditions for both subproblems
(cf. Lemma \ref{lem2.4}).

\subsection{Analysis of the PDE model}\label{sec-2.3}

We start this section with a definition of weak solutions to problem \reff{e1.1}--\reff{e1.4c}.
For convenience, we assume that $\bbf, \bbf_1,\phi$
and $\phi_1$ all are independent of $t$ in the remaining of the paper. We note that
all the results of this paper can be easily extended to the time-dependent case.

\begin{definition}\label{weak1}
Let $\bu_0\in\bH^1(\Ome), \bbf\in\bL^2(\Omega),
\bbf_1\in \bL^2(\p\Ome), p_0\in L^2(\Ome), \phi\in L^2(\Ome)$,
and $\phi_1\in L^2(\p\Ome)$.  Assume
$(\bbf,\bv)+\langle \bbf_1, \bv \rangle =0$ for any $\bv\in \mathbf{RM}$.
Given $T > 0$, a tuple $(\bu,p)$ with
\begin{alignat*}{2}
&\bu\in L^\infty\bigl(0,T; \bH_\perp^1(\Ome)),
&&\qquad p\in L^2 \bigl(0,T; H^1(\Omega)\bigr), \\
&(c_0p+\alpha \div\bu)_t \in L^2(0,T;H^{-1}(\Ome)),
&&\qquad c_0^{\frac12} p\in L^\infty\bigl(0,T; L^2(\Ome)),
\end{alignat*}
is called a weak solution to \reff{e1.1}--\reff{e1.4c},
if there hold for almost every $t \in [0,T]$
\begin{alignat}{2}\label{e2.1}
&\mu \bigl( \vepsi(\bu), \vepsi(\bv) \bigr)
+\lam\bigl(\div\bu, \div\bv \bigr)
-\alpha \bigl( p, \div \bv \bigr)  && \\
&\hskip 2in
=(\bbf, \bv)+\langle \bbf_1,\bv\rangle
&&\quad\forall \bv\in \bH^1(\Ome), \no \\
&\bigl((c_0 p +\alpha\div\bu)_t, \varphi \bigr)_{\rm dual}
+ \frac{1}{\mu_f} \bigl( K(\nab p-\rho_f\bg), \nab \varphi \bigr)
\label{e2.2} \\
&\hskip 2in =\bigl(\phi,\varphi\bigr)
+\langle \phi_1,\varphi \rangle
&&\quad\forall \varphi \in H^1(\Ome), \no  \\
&\bu(0) = \bu_0,\qquad p(0)=p_0.  && \label{e2.3}
\end{alignat}
\end{definition}

\medskip
Similarly, we can weak solutions to problem \reff{e1.7}--\reff{e1.9},
\reff{e1.4a}--\reff{e1.4c}.

\begin{definition}\label{weak2}
Let $\bu_0\in \bH^1(\Ome), \bbf \in \bL^2(\Omega),
\bbf_1 \in \bL^2(\p\Ome), p_0\in L^2(\Ome), \phi\in L^2(\Ome)$,
and $\phi_1\in L^2(\p\Ome)$.  Assume
$(\bbf,\bv)+\langle \bbf_1, \bv \rangle =0$ for any $\bv\in \mathbf{RM}$.
Given $T > 0$, a $5$-tuple $(\bu,\xi,\eta,p,q)$ with
\begin{alignat*}{2}
&\bu\in L^\infty\bigl(0,T; \bH_\perp^1(\Ome)), &&\qquad
\xi\in L^2 \bigl(0,T; L^2(\Omega)\bigr), \\
&\eta\in L^\infty\bigl(0,T; L^2(\Omega)\bigr)
\cap H^1\bigl(0,T; H^{-1}(\Omega)\bigr),
&&\qquad q\in L^\infty(0,T;L^2(\Ome)), \\
&p\in L^2 \bigl(0,T; H^1(\Omega)\bigr),  &&
\end{alignat*}
is called a weak solution to \reff{e1.7}--\reff{e1.9}, \reff{e1.4a}--\reff{e1.4c}
if there hold for almost every $t \in [0,T]$
\begin{alignat}{2}\label{e2.4}
\mu \bigl(\vepsi(\bu), \vepsi(\bv) \bigr)-\bigl( \xi, \div \bv \bigr)
&= (\bbf, \bv)+\langle \bbf_1,\bv\rangle
&&\quad\forall \bv\in \bH^1(\Ome), \\
\kappa_3 \bigl( \xi, \varphi \bigr) +\bigl(\div\bu, \varphi \bigr)  
&= \kappa_1\bigl(\eta, \varphi \bigr) &&\quad\forall \varphi \in L^2(\Ome), \label{e2.5}  \\
\bigl(\eta_t, \psi \bigr)_{\rm dual}
+\frac{1}{\mu_f} \bigl(K(\nab (\kappa_1\xi +\kappa_2\eta) &-\rho_f\bg), \nab \psi \bigr) \label{e2.6} \\
&= (\phi, \psi)+\langle \phi_1,\psi\rangle &&\quad\forall \psi \in H^1(\Ome) , \no  \\
p:=\kappa_1\xi +\kappa_2\eta, \qquad
&q:=\kappa_1\eta-\kappa_3\xi, && \label{e2.7} \\
\bu(0) = \bu_0, \qquad p(0) &=p_0, && \label{e2.8} \\
q(0)=q_0:=\div \bu_0,\quad \quad \eta(0)= \eta_0
:&=c_0p_0+\alpha q_0.  && \label{e2.9}
\end{alignat}
\end{definition}

\begin{remark}\label{rem-2.1}
(a) 
After $\xi$ and $\eta$ are computed, $p$ and $q$ are simply updated by their
algebraic expressions in \reff{e2.7}.

(b) Equation \reff{e2.6} implicitly imposes the following boundary condition for $\eta$:
\begin{align}\label{e2.010}
\kappa_2K\frac{\partial\eta}{\partial n}=K\rho_fg\cdot n-\kappa_1K\frac{\partial\xi}{\partial n}.
\end{align}

(c) It should be pointed out that the only reason for introducing the space $\bH_\perp^1(\Ome)$
in the above two definitions is that the boundary condition \eqref{e1.4a} 
is a pure ``Neumann condition". If it is replaced by a pure Dirichlet 
condition or by a mixed Dirichlet-Neumann condition, there is no need
to introduce this space. This fact will be used in our numerical experiments in 
Section \ref{sec-4}. We also note that from the analysis point of view,  the pure Neumann 
condition case is the most difficult case. 
\end{remark}

\begin{lemma}\label{lem2.1}
Every weak solution $(\bu,p)$ of problem \eqref{e2.1}--\eqref{e2.3} satisfies
the following energy law:
\begin{align}\label{e2.10}
E(t) + \frac{1}{\mu_f} \int_0^t \bigl( K(\nab p-\rho_f\bg), \nab p\bigr)\, ds 
-\int_0^t \bigl(\phi, p\bigr)\, ds
&-\int_0^t \langle \phi_1, p \rangle\, ds =E(0) 
\end{align}
for all $t\in [0,T]$,  where
\begin{align}\label{e2.10a}
E(t):&= \frac12 \Bigl[ \mu \norm{\vepsi(\bu(t))}{L^2(\Ome)}^2
+\lam \norm{\div \bu(t)}{L^2(\Ome)}^2
+c_0\norm{p(t)}{L^2(\Ome)}^2 \\
&\qquad
-2\bigl(\bbf,\bu(t)\bigr) -2\langle \bbf_1, \bu(t) \rangle \Bigr]. \no
\end{align}
Moreover,
\begin{align}\label{e2.11}
\norm{(c_0p+\alpha \div \bu)_t}{L^2(0.T;H^{-1}(\Ome))} 
&\leq\frac{K}{\mu_f} \norm{\nab p-\rho_f \bg}{L^2(\Ome_T)}  \\
&\qquad
+ \|\phi\|_{L^2(\Ome_T)} + \|\phi_1\|_{L^2(\p\Ome_T)} < \infty. \no
\end{align}

\end{lemma}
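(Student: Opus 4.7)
The plan is to establish the energy identity \eqref{e2.10} by the classical energy method: formally use $\bv = \bu_t$ as a test function in \eqref{e2.1} and $\varphi = p$ as a test function in \eqref{e2.2}, then add the two resulting identities so that the coupling term $\alpha(p,\div\bu_t)$ cancels. Since the data $\bbf,\bbf_1,\phi,\phi_1$ are time-independent, substituting $\bv=\bu_t$ into \eqref{e2.1} converts the left-hand side into an exact time derivative and gives
\begin{align*}
\frac{d}{dt}\Bigl[\tfrac{\mu}{2}\norm{\vepsi(\bu)}{L^2(\Ome)}^2 + \tfrac{\lam}{2}\norm{\div\bu}{L^2(\Ome)}^2 - (\bbf,\bu) - \langle\bbf_1,\bu\rangle\Bigr] = \alpha(p,\div\bu_t),
\end{align*}
while substituting $\varphi = p$ into \eqref{e2.2} and splitting the dual pairing using the chain rule produces
\begin{align*}
\frac{d}{dt}\Bigl[\tfrac{c_0}{2}\norm{p}{L^2(\Ome)}^2\Bigr] + \alpha(\div\bu_t,p) + \frac{1}{\mu_f}\bigl(K(\nab p-\rho_f\bg),\nab p\bigr) = (\phi,p) + \langle\phi_1,p\rangle.
\end{align*}
Adding the two identities so that the $\alpha$ terms cancel, and integrating from $0$ to $t$, yields \eqref{e2.10}.

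The main obstacle is regularity: the definition of weak solution does not provide $\bu_t \in \bH^1(\Ome)$ or the pointwise-in-time membership $p(t)\in H^1(\Ome)$ needed to validate the above substitutions directly; in particular, the integration by parts in time that turns $((c_0p+\alpha\div\bu)_t,p)_{\rm dual}$ into $\frac{d}{dt}[\tfrac{c_0}{2}\norm{p}{L^2(\Ome)}^2]+\alpha(\div\bu_t,p)$ needs care. I would handle this by a standard Steklov time-averaging: replace $\bu,p$ by the averages $\bu^h(t):=h^{-1}\int_t^{t+h}\bu(s)\,ds$ and $p^h(t)$, which inherit enough temporal smoothness to make $\bu^h_t$ and $p^h$ admissible test functions in the averaged versions of \eqref{e2.1}--\eqref{e2.2}; carry out the cancellation at the averaged level, then pass $h\to 0$ using Lebesgue's differentiation theorem. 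Equivalently, one can derive the identity first for a Galerkin approximation (where $\bu_t$ is admissible in the finite-dimensional test space by construction) and pass to the limit using weak/weak-$*$ lower semicontinuity to recover \eqref{e2.10} for every weak solution.

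For the bound \eqref{e2.11}, I would read \eqref{e2.2} as identifying $(c_0p+\alpha\div\bu)_t$ as an element of $H^{-1}(\Ome)$ at almost every $t$: for every $\varphi\in H^1(\Ome)$,
\begin{align*}
\bigl((c_0p+\alpha\div\bu)_t,\varphi\bigr)_{\rm dual} = -\frac{1}{\mu_f}\bigl(K(\nab p-\rho_f\bg),\nab\varphi\bigr) + (\phi,\varphi) + \langle\phi_1,\varphi\rangle.
\end{align*}
Bounding the right-hand side by Cauchy--Schwarz together with the trace inequality for the boundary pairing, taking the supremum over $\norm{\varphi}{H^1(\Ome)}\le 1$, and then the $L^2$-norm in $t$ over $(0,T)$ delivers the inequality in \eqref{e2.11}. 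Finiteness of the right-hand side follows directly from the energy law already proved: after absorbing the $\rho_f\bg$ contribution by Young's inequality, \eqref{e2.10} controls $\norm{\nab p}{L^2(\Ome_T)}$, while $\phi$ and $\phi_1$ are in $L^2$ by hypothesis.
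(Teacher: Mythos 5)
Your proposal is correct and follows essentially the same route as the paper: test \eqref{e2.1} with $\bv=\bu_t$, test \eqref{e2.2} with $\varphi=p$, add so the $\alpha(p,\div\bu_t)$ terms cancel, integrate in time, and handle the insufficient regularity of $\bu_t$ via a Steklov-averaging (or Galerkin) argument before passing to the limit. Your derivation of \eqref{e2.11} likewise mirrors the paper's one-line observation, merely spelling out the Cauchy--Schwarz and trace-inequality steps.
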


\begin{proof}
We only consider the case $\bu_t\in \bL^2((0, T);\bL^2(\Omega))$, the general case can be 
converted into this case using the Steklov average technique (cf. \cite[Chapter 2]{LSU}).
Setting $\varphi=p$ in \reff{e2.2} and $\bv=\bu_t$ in \reff{e2.1} yields for a.e. $t\in [0, T]$
\begin{alignat}{2}\label{e2.12}
&\bigl((c_0 p +\alpha\div\bu)_t, p(t) \bigr)_{\small\rm dual}
+ \frac{1}{\mu_f} \bigl( K(\nab p-\rho_f\bg), \nab p \bigr)
=\bigl(\phi,p\bigr) +\langle \phi_1, p \rangle, &&\\
&\mu \bigl( \vepsi(\bu), \vepsi(\bu_t) \bigr) +\lam\bigl(\div\bu, \div\bu_t \bigr)
-\alpha \bigl( p, \div \bu_t \bigr) 
=(\bbf, \bu_t)+\langle \bbf_1,\bu_t\rangle. \label{e2.13} &&
\end{alignat}

Adding the above two equations and integrating the sum in $t$ over the interval $(0, s)$ for 
any $s\in(0, T]$ yield
\begin{align}\label{e2.14}
E(s) + \frac{1}{\mu_f} \int_0^s \bigl(K (\nab p-\rho_f\bg), \nab p\bigr)\, dt
-\int_0^s \bigl(\phi, p\bigr)\, dt
-\int_0^s \langle \phi_1, p \rangle\, dt =E(0),
\end{align}
where $E(\cdot)$ is given by \eqref{e2.10a}. Here we have used the fact that $\bbf$ and $\bbf_1$ 
are independent of $t$.  Hence, \reff{e2.10} holds.

\eqref{e2.11} follows immediately from \eqref{e2.10} and \eqref{e2.2}. The proof is complete.
\end{proof}

Likewise, weak solutions of \eqref{e2.4}--\eqref{e2.9} satisfy a similar energy law which
is a rewritten version of \eqref{e2.10} in the new variables.

\begin{lemma}\label{lem2.2}
Every weak solution $(\bu,\xi,\eta,p,q)$ of problem \eqref{e2.4}--\eqref{e2.9} satisfies
the following energy law:
\begin{align}\label{e2.15a}
J(t) + \frac{1}{\mu_f} \int_0^t \bigl(K(\nab p-\rho_f\bg), \nab p\bigr)\, ds 
-\int_0^t \bigl(\phi, p\bigr)\, ds
&-\int_0^t \langle \phi_1, p \rangle\, ds =J(0) 
\end{align}
for all $t\in [0,T]$,  where
\begin{align}\label{e2.15b}
J(t):&= \frac12 \Bigl[ \mu \norm{\vepsi(\bu(t))}{L^2(\Ome)}^2
+\kappa_2 \norm{\eta(t)}{L^2(\Ome)}^2 +\kappa_3 \norm{\xi(t)}{L^2(\Ome)}^2 \\
&\qquad
-2\bigl(\bbf,\bu(t)\bigr) -2\langle \bbf_1, \bu(t) \rangle \Bigr]. \no
\end{align}
Moreover,
\begin{align}\label{e2.15c}
\norm{\eta_t}{L^2(0.T;H^{-1}(\Ome))} 
&\leq\frac{K}{\mu_f} \norm{\nab p-\rho_f \bg}{L^2(\Ome_T)}  \\
&\qquad
+ \|\phi\|_{L^2(\Ome_T)} + \|\phi_1\|_{L^2(\p\Ome_T)} < \infty. \no
\end{align}

\end{lemma}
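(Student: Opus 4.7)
The plan is to derive Lemma \ref{lem2.2} directly from Lemma \ref{lem2.1}, exploiting the fact that the reformulation \eqref{e2.4}--\eqref{e2.6} is obtained from \eqref{e2.1}--\eqref{e2.2} by the purely algebraic change of variables $p=\kappa_1\xi+\kappa_2\eta$ and $\div\bu=\kappa_1\eta-\kappa_3\xi$, so that a weak solution of one system immediately yields a weak solution of the other. Granted this correspondence, the heart of the matter is to show that the two energy functionals coincide: $E(t)\equiv J(t)$ on every weak solution. Once this is established, the identity \eqref{e2.15a} is just a rewriting of \eqref{e2.10} in the new variables, and \eqref{e2.15c} follows from \eqref{e2.11} because $\eta=c_0p+\alpha\div\bu$ implies $\eta_t=(c_0p+\alpha\div\bu)_t$ in $L^2(0,T;H^{-1}(\Ome))$.

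To verify $E\equiv J$, I would expand
\begin{align*}
c_0\|p\|_{L^2(\Ome)}^2+\lam\|\div\bu\|_{L^2(\Ome)}^2
=c_0\|\kappa_1\xi+\kappa_2\eta\|_{L^2(\Ome)}^2+\lam\|\kappa_1\eta-\kappa_3\xi\|_{L^2(\Ome)}^2
\end{align*}
and collect the $\|\xi\|^2$, $\|\eta\|^2$, and $(\xi,\eta)$ contributions. A short calculation using \eqref{e1.6} gives $c_0\kappa_1^2+\lam\kappa_3^2=\kappa_3$, $c_0\kappa_2^2+\lam\kappa_1^2=\kappa_2$, and $c_0\kappa_2-\lam\kappa_3=0$, so the cross term vanishes and the right-hand side reduces exactly to $\kappa_3\|\xi\|_{L^2(\Ome)}^2+\kappa_2\|\eta\|_{L^2(\Ome)}^2$. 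The remaining pieces of $E$ and $J$ (the $\mu$-term and the loading terms in $\bbf,\bbf_1$) are identical by inspection. Hence $E(t)=J(t)$ pointwise in $t$, and in particular $E(0)=J(0)$. Since the integrals in \eqref{e2.10} and \eqref{e2.15a} involve only $p$, $\nab p$, $\phi$, $\phi_1$, which are left unchanged by the substitution, \eqref{e2.15a} follows at once from \eqref{e2.10}; the bound \eqref{e2.15c} is literally the same inequality as \eqref{e2.11}.

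The step I expect to require the most care, and thus be the main obstacle, is a rigorous justification of the one-to-one correspondence between weak solutions of \eqref{e2.1}--\eqref{e2.3} and of \eqref{e2.4}--\eqref{e2.9} at the stated regularity levels, so that Lemma \ref{lem2.1} may legitimately be invoked. This is essentially a routine unpacking of \eqref{e1.5}--\eqref{e1.6} together with the two variational formulations, but the dual pairings in \eqref{e2.2} and \eqref{e2.6} (with $\eta\in H^1(0,T;H^{-1})$) do require some bookkeeping. As a self-contained alternative that avoids the reduction, one can mimic the proof of Lemma \ref{lem2.1} directly in the new variables: test \eqref{e2.4} with $\bv=\bu_t$, test \eqref{e2.6} with $\psi=p=\kappa_1\xi+\kappa_2\eta$, and combine with the time-differentiated version of \eqref{e2.5} tested with $\varphi=\xi$. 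The cross term $\kappa_1(\eta_t,\xi)$ appears with opposite signs in the first combination and in \eqref{e2.6} and cancels, leaving $\frac{d}{dt}J(t)+\frac{1}{\mu_f}\bigl(K(\nab p-\rho_f\bg),\nab p\bigr)=(\phi,p)+\langle\phi_1,p\rangle$, which integrates to \eqref{e2.15a}. The low temporal regularity ($\eta_t\in H^{-1}$ and $\xi_t$ only distributional) is handled exactly as in Lemma \ref{lem2.1} via Steklov averaging, and \eqref{e2.15c} is obtained by duality directly from \eqref{e2.6}.
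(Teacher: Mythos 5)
Your proposal is correct, and your \emph{primary} route (reducing Lemma \ref{lem2.2} to Lemma \ref{lem2.1} by showing $E(t)\equiv J(t)$ pointwise) is genuinely different from what the paper does, while your ``self-contained alternative'' at the end coincides almost verbatim with the paper's proof: test \eqref{e2.4} with $\bv=\bu_t$, differentiate \eqref{e2.5} in $t$ and test with $\varphi=\xi$, test \eqref{e2.6} with $\psi=p=\kappa_1\xi+\kappa_2\eta$, add and integrate, with the cross term $\kappa_1(\eta_t,\xi)$ cancelling as you describe, and with $\eqref{e2.15c}$ following from \eqref{e2.6} and \eqref{e2.15a} by duality. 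The reduction route is clean and the algebra checks out: with $\kappa_1,\kappa_2,\kappa_3$ as in \eqref{e1.6} one indeed has
\begin{equation*}
c_0\kappa_1^2+\lam\kappa_3^2=\kappa_3,\qquad c_0\kappa_2^2+\lam\kappa_1^2=\kappa_2,\qquad c_0\kappa_1\kappa_2-\lam\kappa_1\kappa_3=0,
\end{equation*}
so $c_0\|p\|_{L^2}^2+\lam\|\div\bu\|_{L^2}^2=\kappa_3\|\xi\|_{L^2}^2+\kappa_2\|\eta\|_{L^2}^2$ and $E\equiv J$, while the dissipation and source integrals in \eqref{e2.10} and \eqref{e2.15a} are literally identical. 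The trade-off you flag is real: the reduction buys a one-line derivation of the energy identity at the cost of having to verify that a weak solution in the sense of Definition \ref{weak2} produces (via the linear relations \eqref{e1.5}) a weak solution in the sense of Definition \ref{weak1} with the stated regularity (in particular $\eta\in L^\infty(0,T;L^2)\cap H^1(0,T;H^{-1})$ corresponds to $c_0^{1/2}p\in L^\infty(0,T;L^2)$, $\bu\in L^\infty(0,T;\bH^1_\perp)$, and $(c_0p+\alpha\div\bu)_t\in L^2(0,T;H^{-1})$), which is routine but deserves a sentence. The paper avoids this bookkeeping by deriving the identity directly in the $(\bu,\xi,\eta)$ variables, at the cost of repeating the Steklov-averaging argument. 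Either route is acceptable.
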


\begin{proof} 
Again, we only consider the case that $\bu_t\in L^2(0,T; L^2(\Ome))$. Setting $\bv=\bu_t$ in \eqref{e2.4},
differentiating \eqref{e2.5} with respect to $t$ followed by taking $\varphi=\xi$, and setting 
$\psi=p=\kappa_1\xi +\kappa_2\eta$ in \eqref{e2.6}; adding the resulting equations and integrating in 
$t$ yield the desired equality \eqref{e2.15a}. The inequality \eqref{e2.15c} follows immediately from 
\eqref{e2.6} and \eqref{e2.15a}.
\end{proof}

The above energy law immediately implies the following solution estimates.

\begin{lemma}\label{estimates}
There exists a positive constant
$ C_1=C_1\bigl(\|\bu_0\|_{H^1(\Ome)}, \|p_0\|_{L^2(\Ome)},$ 
$\|\bbf\|_{L^2(\Ome)},\|\bbf_1\|_{L^2(\p \Ome)},\|\phi\|_{L^2(\Ome)}, \|\phi_1\|_{L^2(\p\Ome)} \bigr)$ 
such that
\begin{align}\label{e2.15d}
&\sqrt{\mu} \|\varepsilon(\bu)\|_{L^\infty(0,T; L^2(\Ome))} 
+\sqrt{\kappa_2} \|\eta\|_{L^\infty(0,T;L^2(\Ome))} \\
&\qquad
+\sqrt{\kappa_3} \|\xi\|_{L^\infty(0,T;L^2(\Ome))} 
+\sqrt{\frac{K}{\mu_f}} \|\nab p \|_{L^2(0,T;L^2(\Ome))} \leq C_1. \no \\
&\|\bu\|_{L^\infty(0,T;L^2(\Ome))}\leq C_1, \qquad 
\|p\|_{L^\infty(0,T;L^2(\Ome))} \leq C_1 \Bigl( 1+ \sqrt{\frac{\kappa_3}{\kappa_1}} \Bigr). \label{e2.15e}
\end{align}
\end{lemma}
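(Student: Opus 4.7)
The plan is to derive all four estimates from the energy identity \eqref{e2.15a} of Lemma \ref{lem2.2} via standard absorption combined with a Gronwall-type argument, exploiting the positive definiteness of $K$ and the Korn-type bound \eqref{e4.1+0}.

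First I would handle the mechanical load contributions $-2(\bbf,\bu(t))-2\langle\bbf_1,\bu(t)\rangle$ hidden inside $J(t)$. Combining Cauchy--Schwarz, the trace inequality, and the Korn bound \eqref{e4.1+0} (available precisely because $\bu(t)\in\bH^1_\bot(\Ome)$), these terms are estimated by $\tfrac{\mu}{2}\|\vepsi(\bu(t))\|_{L^2(\Ome)}^2+C(\|\bbf\|_{L^2(\Ome)}^2+\|\bbf_1\|_{L^2(\p\Ome)}^2)$, and the first piece is absorbed into the $\mu\|\vepsi(\bu(t))\|^2$ term inside $J(t)$. This leaves a nonnegative quadratic $\tilde{J}(t)$ on the left-hand side that still controls $\mu\|\vepsi(\bu(t))\|^2+\kappa_2\|\eta(t)\|^2+\kappa_3\|\xi(t)\|^2$. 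For the diffusion integral I would use positive definiteness of $K$ to bound it below by $\tfrac{k_0}{\mu_f}\int_0^t\|\nab p\|^2\,ds$ (with $k_0>0$ the smallest eigenvalue of $K$) and absorb the gravity contribution $\tfrac{1}{\mu_f}\int_0^t(K\rho_f\bg,\nab p)\,ds$ via Cauchy--Schwarz and Young's inequality.

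The source terms $\int_0^t[(\phi,p)+\langle\phi_1,p\rangle]\,ds$ require more care. I would substitute $p=\kappa_1\xi+\kappa_2\eta$ and apply weighted Young's inequalities to each of $\kappa_1(\phi,\xi)$ and $\kappa_2(\phi,\eta)$, together with the trace inequality for the boundary integral. After integration in time, the residual quadratic pieces in $\xi$ and $\eta$ are dominated by $\int_0^t\tilde{J}(s)\,ds$, so Gronwall's inequality yields pointwise control of $\tilde{J}(t)$ and simultaneously the $L^2(0,T;L^2)$ bound on $\nab p$; this delivers the first displayed inequality. The bound $\|\bu\|_{L^\infty(L^2)}\le C_1$ then follows immediately from \eqref{e4.1+0} applied pointwise in $t$. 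Finally, $\|p(t)\|\le\kappa_1\|\xi(t)\|+\kappa_2\|\eta(t)\|$, and inserting the just-proved bounds $\sqrt{\kappa_3}\|\xi\|,\,\sqrt{\kappa_2}\|\eta\|\le C_1$ and regrouping the $\kappa$-weights yields the stated form $C_1(1+\sqrt{\kappa_3/\kappa_1})$.

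The main obstacle I anticipate is the treatment of $\int_0^t(\phi,p)\,ds$ and $\int_0^t\langle\phi_1,p\rangle\,ds$: unlike the purely data-driven terms, $p$ is an unknown that is not mean-zero and is not directly controlled by the LHS of \eqref{e2.15a}, so a naive Cauchy--Schwarz would leave unabsorbable $\|p\|$ factors. The decomposition $p=\kappa_1\xi+\kappa_2\eta$ together with careful splitting of the $\kappa$-weights is essential to produce quadratic expressions in $\xi$ and $\eta$ that can be absorbed into $\tilde{J}(s)$ or closed off by Gronwall; this is also where the implicit dependence of $C_1$ on the model parameters (beyond the listed data norms) enters.
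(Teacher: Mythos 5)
Your overall route --- read the estimates off the energy identity of Lemma \ref{lem2.2}, absorb the load terms with Korn \eqref{e4.1+0}, bound the diffusion term from below via positive definiteness of $K$, and close the source integrals $\int_0^t(\phi,p)\,ds$, $\int_0^t\langle\phi_1,p\rangle\,ds$ by Young and Gronwall --- is exactly the standard argument the paper alludes to when it says the energy law ``immediately implies'' the estimates, and it does deliver \eqref{e2.15d} and the $\bu$-half of \eqref{e2.15e}.

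There is, however, a concrete gap in your derivation of the $p$-bound. Inserting the energy-law bounds $\sqrt{\kappa_3}\,\norm{\xi}{L^\infty(0,T;L^2(\Ome))}\le C_1$ and $\sqrt{\kappa_2}\,\norm{\eta}{L^\infty(0,T;L^2(\Ome))}\le C_1$ into $\norm{p}{L^2(\Ome)}\le\kappa_1\norm{\xi}{L^2(\Ome)}+\kappa_2\norm{\eta}{L^2(\Ome)}$ gives
\[
\norm{p}{L^\infty(0,T;L^2(\Ome))}\le C_1\Bigl(\frac{\kappa_1}{\sqrt{\kappa_3}}+\sqrt{\kappa_2}\Bigr),
\]
and no ``regrouping of $\kappa$-weights'' turns $\kappa_1/\sqrt{\kappa_3}$ into $\sqrt{\kappa_3/\kappa_1}$: with $\kappa_1=\alpha/(\alpha^2+\lam c_0)$ and $\kappa_3=c_0/(\alpha^2+\lam c_0)$, $\kappa_1/\sqrt{\kappa_3}=\alpha/\sqrt{c_0(\alpha^2+\lam c_0)}\to\infty$ while $\sqrt{\kappa_3/\kappa_1}=\sqrt{c_0/\alpha}\to 0$ as $c_0\to 0$. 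So your bound degenerates in precisely the regime where \eqref{e2.15e} is later used, namely in Theorem \ref{thm2.6} where $p_{c_0}$ is claimed $c_0$-uniformly bounded in $L^\infty(0,T;L^2(\Ome))$. The same degeneracy is latent in your handling of $\kappa_1\int_0^t(\phi,\xi)\,ds$: absorbing it against $\kappa_3\norm{\xi}{L^2(\Ome)}^2$ by weighted Young leaves a residual coefficient $\kappa_1^2/\kappa_3$, which also blows up as $c_0\to0$.

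Both problems are avoided by not controlling $\xi$ solely through the $\sqrt{\kappa_3}$-weighted energy norm. For the pointwise $p$-bound, read $\xi$ off the generalized Stokes equation \eqref{e2.4} using the inf-sup condition \eqref{e2.0a}: this yields $\norm{\xi(t)-\bar\xi(t)}{L^2(\Ome)}\le C\bigl(\mu\norm{\vepsi(\bu(t))}{L^2(\Ome)}+\norm{\bbf}{L^2(\Ome)}+\norm{\bbf_1}{L^2(\p\Ome)}\bigr)$, with $\bar\xi(t)=C_\xi(t)/|\Ome|$ the known mean from Lemma \ref{lem2.4} (whose proof uses only test functions $\varphi\equiv1$, $\bv=x$ and is independent of this lemma). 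Since $\kappa_1,\kappa_2$ stay bounded as $c_0\to0$, this gives a $c_0$-uniform bound on $\norm{p}{L^\infty(0,T;L^2(\Ome))}$. For the source term, split $p=(p-\bar p)+\bar p$ with $\bar p(t)=C_p(t)/|\Ome|$ known, use Poincar\'e--Wirtinger on $p-\bar p$ and absorb the resulting $\norm{\nab p}{L^2(\Ome)}$ into the diffusion integral by Young; this also removes the need for Gronwall. (To be fair, the paper's one-line justification of \eqref{e2.15e} appears to share this gap, and the exact $\kappa$-factor stated there may be a misprint; but the mechanism you propose cannot produce it.)
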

We note that \eqref{e2.15e} follows from \eqref{e2.15d}, \eqref{e4.1+} and the relation 
$p=\kappa_1\xi +\kappa_2\eta$.

Furthermore, by exploiting the linearity of the PDE system, we have the following
a priori estimates for the weak solution.

\begin{theorem}\label{smooth}
Suppose that $\bu_0$ and $p_0$ are sufficiently smooth, then 
there exists a positive constant $C_2=C_2\bigl(C_1,\|\nab p_0\|_{L^2(\Ome)} \bigr)$ 
and $C_3=C_3\bigl(C_1,C_2, \|\bu_0\|_{H^2(\Ome)},\|p_0\|_{H^2(\Ome)} \bigr)$ such that 
there hold the following estimates for the solution to problem 
\reff{e1.7}--\reff{e1.9},\reff{e1.4a}--\reff{e1.4c}:
\begin{align}\label{eq3.21}
&\sqrt{\mu} \|\varepsilon(\bu_t)\|_{L^2(0,T; L^2(\Ome))} 
+\sqrt{\kappa_2} \|\eta_t\|_{L^2(0,T;L^2(\Ome))} \\
&\qquad
+\sqrt{\kappa_3} \|\xi_t\|_{L^2(0,T;L^2(\Ome))} 
+\sqrt{\frac{K}{\mu_f}} \|\nab p \|_{L^\infty(0,T;L^2(\Ome))} \leq C_2. \no \\
&\sqrt{\mu} \|\varepsilon(\bu_t)\|_{L^\infty(0,T; L^2(\Ome))} 
+\sqrt{\kappa_2} \|\eta_t\|_{L^\infty(0,T;L^2(\Ome)) \label{eq3.22}} \\
&\qquad
+\sqrt{\kappa_3} \|\xi_t\|_{L^\infty(0,T;L^2(\Ome))} 
+\sqrt{\frac{K}{\mu_f}} \|\nab p_t \|_{L^2(0,T;L^2(\Ome))} \leq C_3. \no \\
&\|\eta_{tt}\|_{L^2(H^{-1}(\Ome))} \leq \sqrt{\frac{K}{\mu_f}}C_3. \label{eq3.23}
\end{align}
\end{theorem}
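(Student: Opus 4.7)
The plan is to bootstrap the energy argument of Lemma \ref{lem2.2} by differentiating the weak equations \eqref{e2.4}--\eqref{e2.6} in time---which is legitimate since $\bbf,\bbf_1,\phi,\phi_1$ are assumed $t$-independent---and then testing against time-derivative-like quantities, relying on the algebraic identity $p_t=\kappa_1\xi_t+\kappa_2\eta_t$ to generate the same cancellations that drove Lemma \ref{lem2.2}. Since $\bu_t,\xi_t,\eta_t$ are not yet known to be admissible test functions, the computations would first be carried out on Steklov averages, as in the proof of Lemma \ref{lem2.2}, and the final bounds obtained by passing to the limit.

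For \eqref{eq3.21} my plan is to take $\bv=\bu_t$ in the $t$-derivative of \eqref{e2.4}, $\varphi=\xi_t$ in the $t$-derivative of \eqref{e2.5}, and $\psi=p_t$ in \eqref{e2.6} itself. Using $p_t=\kappa_1\xi_t+\kappa_2\eta_t$, the mixed products $(\xi_t,\div\bu_t)$ and $\kappa_1(\eta_t,\xi_t)$ cancel, leaving the formal identity
\begin{equation*}
\mu\|\vepsi(\bu_t)\|^2+\kappa_3\|\xi_t\|^2+\kappa_2\|\eta_t\|^2
+\frac{d}{dt}\Bigl[\frac{1}{2\mu_f}(K\nab p,\nab p)-R(t)\Bigr]=0,
\end{equation*}
where $R(t)$ collects the linear-in-$p$ terms involving $\rho_f\bg$, $\phi$, and $\phi_1$. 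Integrating from $0$ to $t$ and absorbing the $R$-contributions via Cauchy--Schwarz with a small factor in front of $\|K^{1/2}\nab p\|^2$ (the remainder being already bounded by Lemma \ref{estimates}) then delivers \eqref{eq3.21}; the constant $C_2$ inherits its dependence on $\|\nab p_0\|_{L^2(\Ome)}$ from the initial term $\|K^{1/2}\nab p(0)\|^2$.

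For \eqref{eq3.22} I would differentiate all three equations in $t$ and apply the Lemma \ref{lem2.2} argument verbatim to the resulting ``$t$-derivative system,'' i.e., take $\bv=\bu_{tt}$ in the $t$-derivative of \eqref{e2.4}, differentiate \eqref{e2.5} twice in $t$ and choose $\varphi=\xi_t$, and take $\psi=p_t$ in the $t$-derivative of \eqref{e2.6}. Adding the three identities, the cancellations of Lemma \ref{lem2.2} reproduce themselves and yield
\begin{equation*}
\frac12\frac{d}{dt}\Bigl[\mu\|\vepsi(\bu_t)\|^2+\kappa_3\|\xi_t\|^2
+\kappa_2\|\eta_t\|^2\Bigr]+\frac{1}{\mu_f}(K\nab p_t,\nab p_t)=0,
\end{equation*}
which integrates to \eqref{eq3.22} once the initial values $\|\vepsi(\bu_t(0))\|,\|\xi_t(0)\|,\|\eta_t(0)\|$ are shown to be finite and controlled by the data norms. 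These would be obtained by reading the equations at $t=0$: \eqref{e2.6} at $t=0$ expresses $\eta_t(0)$ through $\div(K\nab p_0)$ and the source data (so $\|p_0\|_{H^2(\Ome)}$ enters here), and the generalized Stokes system formed by the $t$-derivatives of \eqref{e2.4}--\eqref{e2.5} at $t=0$ then controls $\vepsi(\bu_t(0))$ and $\xi_t(0)$ in terms of $\|\eta_t(0)\|_{L^2(\Ome)}$ and $\|\bu_0\|_{H^2(\Ome)}$. Finally, \eqref{eq3.23} follows immediately from the $t$-derivative of \eqref{e2.6}: for any $\psi\in H^1(\Ome)$ one has $|(\eta_{tt},\psi)_{\rm dual}|\le (K/\mu_f)\|\nab p_t\|\,\|\nab\psi\|$, hence $\|\eta_{tt}\|_{H^{-1}(\Ome)}\le (K/\mu_f)\|\nab p_t\|$, and squaring, integrating in $t$, and invoking \eqref{eq3.22} gives the stated bound.

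The main obstacle I foresee is the bookkeeping at $t=0$: one must verify that the pointwise-in-time values of $\bu_t,\xi_t,\eta_t$ exist in $L^2(\Ome)$ and are controlled by the stated data norms, which requires consistency between the Stokes character of \eqref{e2.4}--\eqref{e2.5} at $t=0$ and the time-differentiated boundary conditions \eqref{e1.4a}--\eqref{e1.4b}. Once that is settled, the Steklov-averaging justification of the formal identities above is routine.
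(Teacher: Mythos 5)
Your proposal matches the paper's proof step for step: the same time-differentiations, the same test-function choices ($\bv=\bu_t$, $\varphi=\xi_t$, $\psi=p_t$ for \eqref{eq3.21}; $\bv=\bu_{tt}$, $\varphi=\xi_t$ after differentiating \eqref{e2.5} twice, $\psi=p_t$ in the $t$-derivative of \eqref{e2.6} for \eqref{eq3.22}), the same cancellation via $p_t=\kappa_1\xi_t+\kappa_2\eta_t$, and the same way of reading off $\eta_t(0)$, $\bu_t(0)$, $\xi_t(0)$ from the equations at $t=0$ (which the paper records in the remark following the theorem). The only cosmetic difference is that the paper groups the $\rho_f\bg$ contribution inside $\|\nab p-\rho_f\bg\|^2$ rather than separating it into your remainder $R(t)$, but the resulting estimates are identical.
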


\begin{proof}
On noting that $\bbf, \bbf_1,\phi$ and $\phi_1$ all are assumed to be independent of $t$, 
differentiating \eqref{e2.4} and \eqref{e2.5} with respect to $t$, taking $\bv=\bu_t$
and $\varphi=\xi_t$ in \eqref{e2.4} and \eqref{e2.5} respectively, and adding the 
resulting equations yield
\begin{align}\label{eq3.23a}
\mu\|\varepsilon(\bu_t)\|_{L^2(\Ome)}^2 = \bigl(q_t,\xi_t \bigr)
=\kappa_1 \bigl(\eta_t,\xi_t\bigr) -\kappa_3\|\xi_t\|_{L^2(\Ome)}^2.
\end{align}
Setting $\psi=p_t=\kappa_1\xi_t + \kappa_2\eta_t$ in \eqref{e2.6} gives 
\begin{align}\label{eq3.23b}
\kappa_1 \bigl(\eta_t,\xi_t \bigr) + \kappa_2\|\eta_t\|_{L^2(\Ome)}^2
+\frac{K}{2\mu_f} \frac{d}{dt} \|\nab p-\rho_f\bg\|_{L^2(\Ome)}^2 
=\frac{d}{dt}\Bigl[ (\phi,p) +\langle \phi_1, p\rangle \Bigr].
\end{align}

Adding \eqref{eq3.23a} and \eqref{eq3.23b} and integrating in $t$ we get for $t\in[0,T]$
\begin{align*}
&\frac{K}{2\mu_f} \|\nab p(t)-\rho_f\bg\|_{L^2(\Ome)}^2
+\int_0^t \Bigl[ \mu\|\varepsilon(\bu_t)\|_{L^2(\Ome)}^2
+ \kappa_2\|\eta_t\|_{L^2(\Ome)}^2 + \kappa_3\|\xi_t\|_{L^2(\Ome)}^2\Bigr]\,ds \\
&\hskip 0.65in
=\frac{K}{2\mu_f} \|\nab p_0-\rho_f\bg\|_{L^2(\Ome)}^2
+ (\phi,p(t)-p_0) +\langle \phi_1, p(t)-p_0 \rangle, \no 
\end{align*}
which readily infers \eqref{eq3.21}.

To show \eqref{eq3.22}, first differentiating \eqref{e2.4} one time with respect to $t$ and setting
$\bv=\bu_{tt}$, differentiating \eqref{e2.5} twice with respect to $t$ and setting 
$\varphi=\xi_t$, and adding the resulting equations we get
\begin{align}\label{eq3.23d}
\frac{\mu}{2}\frac{d}{dt}\|\varepsilon(\bu_t)\|_{L^2(\Ome)}^2 = \bigl(q_{tt},\xi_t \bigr)
=\kappa_1\bigl(\eta_{tt},\xi_t\bigr) -\frac{\kappa_3}{2}\frac{d}{dt}\|\xi_t\|_{L^2(\Ome)}^2.
\end{align}

Second, differentiating \eqref{e2.6} with respect $t$ one time and taking 
$\psi=p_t=\kappa_1\xi_t + \kappa_2\eta_t$ yield
\begin{align}\label{eq3.23e}
\kappa_1 \bigl(\eta_{tt},\xi_t \bigr) + \frac{\kappa_2}{2} \frac{d}{dt} \|\eta_t\|_{L^2(\Ome)}^2
+\frac{K}{\mu_f} \|\nab p_t\|_{L^2(\Ome)}^2 =0.
\end{align}

Finally, adding the above two inequalities and integrating in $t$ give for $t\in [0,T]$
\begin{align}\label{eq3.23f}
&\mu \|\varepsilon(\bu_t(t))\|_{L^2(\Ome)}^2 +\kappa_2 \|\eta_t(t)\|_{L^2(\Ome)}^2
+\kappa_3 \|\xi_t(t)\|_{L^2(\Ome)}^2 + \frac{2K}{\mu_f} \int_0^t \|\nab p_t\|_{L^2(\Ome)}^2\,ds\\
&\hskip 0.6in
=\mu \|\varepsilon(\bu_t(0))\|_{L^2(\Ome)}^2 +\kappa_2 \|\eta_t(0)\|_{L^2(\Ome)}^2
+\kappa_3 \|\xi_t(0)\|_{L^2(\Ome)}^2.\no
\end{align}
Hence, \eqref{eq3.22} holds.  \eqref{eq3.23} follows immediately from the following inequality
\begin{align*}
\bigl( \eta_{tt}, \psi \bigr) = -\frac{1}{\mu_f} \bigl( K\nab p, \nab \psi\bigr)
\leq \frac{K}{\mu_f} \|\nab p\|_{L^2(\Ome)} \|\nab \psi\|_{L^2(\Ome)}\qquad \forall \psi\in H^1_0(\Ome),
\end{align*}
\eqref{eq3.22} and the definition of the $H^{-1}$-norm. The proof is complete.
\end{proof}

\begin{remark}
As expected, the above high order norm solution estimates require $p_0\in H^1(\Ome),
\bu_t(0)\in \bL^2(\Ome), \eta_t(0)\in L^2(\Ome)$ and $\xi_t(0)\in L^2(\Ome)$.  The values 
of $\bu_t(0), \eta_t(0)$ and $\xi_t(0)$ can be computed using the PDEs as follows.
It follows from \eqref{e1.9} that $\eta_t(0)$ satisfies
\begin{align*}
\eta_t(0)= \phi + \frac{1}{\mu_f} \div[K (\nab p_0-\rho_f\bg)].
\end{align*}
Hence $\eta_t(0)\in L^2(\Ome)$ provided that $p_0\in H^2(\Ome)$.  

To find $\bu_t(0)$ and $\xi_t(0)$, differentiating \eqref{e1.7} and \eqref{e1.8} with 
respect to $t$ and setting $t=0$ we get 
\begin{alignat*}{2} 
-\mu\div\vepsi(\bu_t(0)) + \nab \xi_t(0) &=0 &&\qquad \mbox{\rm in } \Ome,\\
\kappa_3\xi_t(0) +\div \bu_t(0) &=\kappa_1\eta_t(0)  &&\qquad \mbox{\rm in } \Ome.
\end{alignat*}
Hence, $\bu_t(0)$ and $\xi_t(0)$ can be determined by solving the above generalized Stokes 
problem. 
\end{remark}

The next lemma shows that weak solutions of problem \reff{e2.4}--\reff{e2.9} preserve 
some ``invariant" quantities, it turns out that these ``invariant" quantities play a vital 
role in the construction of our time-splitting scheme to be introduced in the next section.

\begin{lemma}\label{lem2.4}
Every weak solution $(\bu,\xi,\eta,p, q)$ to problem \reff{e2.4}--\reff{e2.9} satisfies 
the following relations:
\begin{align}\label{e1.020}
&C_\eta(t):=\bigl(\eta(\cdot, t),1\bigr)
=\bigl(\eta_0,1\bigr) + \bigl[ (\phi,1) + \langle \phi_1, 1\rangle \bigr] t,\quad t\geq 0.\\
&C_\xi(t):=\bigl( \xi(\cdot,t), 1\bigr)=\frac{1}{d+\mu\kappa_3} \bigl[ \mu\kappa_1 C_\eta(t)
- \bigl(\bbf, x\bigr)- \langle \bbf_1,x\rangle \bigr].\label{e1.022} \\
&C_q(t):=\bigl( q(\cdot,t), 1\bigr) 
=\kappa_1 C_\eta(t)-\kappa_3 C_\xi(t). \label{e1.023}\\
&C_p(t):=\bigl( p(\cdot,t), 1\bigr)
=\kappa_1 C_\xi(t)+\kappa_2 C_\eta(t). \label{e1.021}\\
&C_\bu(t) := \bigl\langle \bu(\cdot,t)\cdot \bn, 1 \bigr\rangle  =C_q(t). \label{e1.024}
\end{align}
\end{lemma}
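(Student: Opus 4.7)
The plan is to derive each of the five identities by testing one of the variational equations \eqref{e2.4}--\eqref{e2.6} against either a constant or the position vector $\bv(x)=x$, and then invoking the algebraic relations \eqref{e2.7}. Constants are admissible test functions in \eqref{e2.5} and \eqref{e2.6} (whose test spaces are $L^2(\Ome)$ and $H^1(\Ome)$ respectively), while the identity field $\bv(x)=x$ lies in $\bH^1(\Ome)$ and is therefore admissible in \eqref{e2.4}.

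First I will set $\psi\equiv 1$ in \eqref{e2.6}. Because $\nab 1\equiv 0$, all terms involving the permeability drop out and the equation collapses to the scalar ODE $\frac{d}{dt}(\eta,1)=(\phi,1)+\langle \phi_1,1\rangle$. Integrating in time, using the initial value $\eta(0)=\eta_0$ from \eqref{e2.9} and the standing assumption that $\phi$ and $\phi_1$ are time-independent, yields the formula \eqref{e1.020} for $C_\eta(t)$.

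Next, taking $\varphi\equiv 1$ in \eqref{e2.5} and using $q=\div\bu$ from \eqref{e2.7} gives $\kappa_3 C_\xi(t)+C_q(t)=\kappa_1 C_\eta(t)$, which is exactly \eqref{e1.023} once $C_\xi(t)$ is known. To pin down $C_\xi(t)$ itself, I will test \eqref{e2.4} with $\bv(x)=x$. A short computation gives $\vepsi(x)=I$ and $\div x=d$, so \eqref{e2.4} reduces to $\mu(\div\bu,1)-d\,C_\xi(t)=(\bbf,x)+\langle \bbf_1,x\rangle$, i.e., $\mu C_q(t)-d\,C_\xi(t)=(\bbf,x)+\langle \bbf_1,x\rangle$. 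Eliminating $C_q(t)$ with the previous relation produces the scalar equation
\begin{equation*}
(d+\mu\kappa_3)\,C_\xi(t)=\mu\kappa_1 C_\eta(t)-(\bbf,x)-\langle \bbf_1,x\rangle,
\end{equation*}
from which \eqref{e1.022} follows, and then \eqref{e1.023} by back-substitution.

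The remaining two identities are immediate. Integrating the pointwise relation $p=\kappa_1\xi+\kappa_2\eta$ from \eqref{e2.7} over $\Ome$ gives \eqref{e1.021}, and the divergence theorem gives $C_\bu(t)=\langle \bu\cdot\bn,1\rangle=(\div\bu,1)=C_q(t)$, which is \eqref{e1.024}. I do not anticipate a genuine obstacle; the only items worth double-checking are the admissibility of $\bv=x$ in \eqref{e2.4} (it sits in $\bH^1(\Ome)$ but not in $\bH^1_\perp(\Ome)$, which is harmless because the test space in \eqref{e2.4} is the full $\bH^1(\Ome)$) and the identification of $(\eta_t,1)_{\text{dual}}$ with $\frac{d}{dt}(\eta,1)$, which is justified by the regularity $\eta\in L^\infty(0,T;L^2(\Ome))\cap H^1(0,T;H^{-1}(\Ome))$ stipulated in Definition \ref{weak2}.
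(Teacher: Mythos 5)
Your proof is correct and follows essentially the same route as the paper's: test \eqref{e2.6} with a constant to get \eqref{e1.020}, then test \eqref{e2.4} with $\bv=x$ and \eqref{e2.5} with $\varphi=1$, combine the two scalar relations to solve for $C_\xi$, and deduce the rest from the algebraic identities in \eqref{e2.7} and the divergence theorem. Your added remarks on the admissibility of $\bv=x$ (it need not lie in $\bH^1_\perp(\Ome)$ because the test space in \eqref{e2.4} is all of $\bH^1(\Ome)$) and on the regularity justifying $(\eta_t,1)_{\rm dual}=\frac{d}{dt}(\eta,1)$ are both correct and are worthwhile clarifications that the paper leaves implicit.
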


\begin{proof}
We first notice that equation \reff{e1.020} follows immediately from taking $\varphi\equiv 1$ 
in \reff{e2.6}, which is a valid test function.

To prove \reff{e1.022}, taking $\bv=x$ in \reff{e2.4} and $\varphi=1$ in \eqref{e2.5}, which are
valid test functions, and using the identities $\nabla x=I, \div x=d$, and $\varepsilon(x)=I$, we get
\begin{align}\label{e2.16}
\mu \bigl( \div \bu, 1\bigr) -d\bigl( \xi, 1\bigr) &=\bigl( \bbf, x\bigr)+ \langle \bbf_1,x\rangle,\\
\bigl( \div \bu, 1\bigr) &=\kappa_1(\eta,1)-\kappa_3(\xi, 1). \label{e2.6a}
\end{align}
Substituting \eqref{e2.6a} into \eqref{e2.16} and using \eqref{e1.020} yield
\begin{align}\label{e2.16b}
C_\xi(t):=\bigl( \xi(\cdot,t), 1\bigr) =\frac{1}{d+\mu\kappa_3} \bigl[ \mu\kappa_1 C_\eta(t)
- \bigl(\bbf, x\bigr)- \langle \bbf_1,x\rangle \bigr].
\end{align}
Hence \eqref{e1.022} holds. \eqref{e1.023} follows immediately from \eqref{e2.6a}, 
\reff{e1.020} and \eqref{e1.022}.

Finally, since $p=\kappa_1\xi + \kappa_2 \eta$, \eqref{e1.021} then follows from \reff{e1.020}
and \eqref{e1.022}. \eqref{e1.024} is an immediate consequence of $q=\div \bu$ and the divergence 
theorem. The proof is complete.
\end{proof}

\begin{remark}
We note that $C_\eta, C_\xi, C_q$ and $C_p$ all are (known) linear functions of $t$, and they become 
(known) constants when $\phi\equiv 0$ and $\phi_1\equiv 0$.
\end{remark}

With the help of the above lemmas, we can show the solvability of problem \reff{e1.1}--\reff{e1.4c}.

\begin{theorem}\label{thm2.5}
Let $\bu_0\in\bH^1(\Ome), \bbf\in\bL^2(\Omega),
\bbf_1\in \bL^2(\p\Ome), p_0\in L^2(\Ome), \phi\in L^2(\Ome)$, and $\phi_1\in L^2(\p\Ome)$.  
Suppose $(\bbf,\bv)+\langle \bbf_1, \bv \rangle =0$ for any $\bv\in \mathbf{RM}$.
Then there exists a unique solution to problem \reff{e1.1}--\reff{e1.4c} in the sense 
of Definition \ref{weak1}, likewise, there exists a unique solution to problem 
\reff{e1.7}--\reff{e1.9},\reff{e1.4a}--\reff{e1.4c} in the sense of Definition \ref{weak2}.
\end{theorem}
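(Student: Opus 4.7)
\medskip
\noindent\textbf{Proof proposal.} The plan is to exploit the linearity of the system and the a priori machinery already established (Lemmas \ref{lem2.1}--\ref{lem2.4} and the inf-sup condition \eqref{e2.0a}). First I would observe that the two weak formulations in Definitions \ref{weak1} and \ref{weak2} are algebraically equivalent: given $(\bu,p)$ satisfying \eqref{e2.1}--\eqref{e2.3}, the functions $\xi:=\alpha p-\lam\div\bu$, $\eta:=c_0 p+\alpha\div\bu$, $q:=\div\bu$ automatically fulfil \eqref{e2.4}--\eqref{e2.9} because \eqref{e1.5}--\eqref{e1.6} hold pointwise; conversely, from a 5-tuple solving Definition \ref{weak2} one recovers a solution of Definition \ref{weak1} by setting $p=\kappa_1\xi+\kappa_2\eta$. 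Thus it suffices to establish existence and uniqueness for one formulation, and the reformulated one is more convenient since it exposes the parabolic--elliptic splitting.

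Uniqueness follows from the energy identity in Lemma \ref{lem2.2}. Taking zero data $\bbf=\bbf_1=\phi=\phi_1=0$ and zero initial data $\bu_0=0$, $p_0=0$, the relation \eqref{e2.15a} collapses to
\begin{equation*}
\tfrac12\bigl[\mu\|\vepsi(\bu(t))\|_{L^2}^2+\kappa_2\|\eta(t)\|_{L^2}^2+\kappa_3\|\xi(t)\|_{L^2}^2\bigr]+\frac{1}{\mu_f}\int_0^t(K\nab p,\nab p)\,ds=0.
\end{equation*}
Since $\bu\in\bH^1_\perp(\Ome)$, Korn's inequality \eqref{e4.1+0} forces $\bu\equiv 0$; the permeability $K$ is positive definite, so $\nab p\equiv 0$, i.e.\ $p$ is spatially constant. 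The conserved quantities $C_p(t)$ and $C_\xi(t)$ of Lemma \ref{lem2.4} then vanish for zero data, which pins down $p\equiv 0$ and $\xi\equiv 0$; finally $\eta=(c_0 p+\alpha q)$ gives $\eta\equiv 0$. In the non-degenerate case $c_0>0$ one can also read uniqueness directly from \eqref{e2.10}.

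For existence I would use a Galerkin method based on the reformulated problem \eqref{e2.4}--\eqref{e2.9}. Pick sequences of finite-dimensional subspaces $\bV_n\subset\bH^1_\perp(\Ome)$, $S_n\subset L^2(\Ome)$, $M_n\subset H^1(\Ome)$ with the usual density and approximation properties, and (when $c_0=0$) the inf-sup compatibility consistent with \eqref{e2.0a}. At each time $t$ the discrete analogue of \eqref{e2.4}--\eqref{e2.6} is a linear differential-algebraic system for Galerkin coefficients; its solvability for every $t$ reduces to the invertibility of a saddle-point matrix (which follows from the inf-sup condition and the coercivity provided by $\mu\vepsi(\bu,\bv)$ plus the $\kappa_3\xi$ term) together with a strictly positive mass contribution for $\eta$ in \eqref{e2.6}. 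Standard ODE theory then yields a global-in-time Galerkin solution $(\bu_n,\xi_n,\eta_n)$. The energy identity of Lemma \ref{lem2.2} is preserved at the Galerkin level by testing with the same $\bv=\bu_{n,t}$, $\varphi=\xi_n$, $\psi=p_n$, giving uniform bounds matching Lemma \ref{estimates}. Passing to the limit $n\to\infty$ by weak-$*$ compactness in $L^\infty(0,T;\bH^1_\perp\times L^2\times L^2)$, weak compactness of $\nab p_n$ in $L^2(\Ome_T)$, the bound \eqref{e2.15c} on $\eta_{n,t}$, and Aubin--Lions to upgrade the convergence of $\eta_n$ to strong in $L^2(0,T;L^2)$ identifies the limit as a weak solution; the algebraic relations \eqref{e2.7} pass to the limit without difficulty, and the initial conditions \eqref{e2.8}--\eqref{e2.9} are recovered from $\eta_n,\bu_n\in C([0,T];\cdot)$ obtained from the $H^1$-in-time bounds.

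The main obstacles I anticipate are the pure Neumann structure of \eqref{e1.4a}--\eqref{e1.4b} and the possibility $c_0=0$. The Neumann boundary condition forces us to work in the quotient-type space $\bH^1_\perp(\Ome)$ and to use \eqref{e2.0a} rather than \eqref{e2.0}; accordingly, the means of $p,\xi,\eta,q$ are not controlled by the energy and must be fixed via the conservation laws of Lemma \ref{lem2.4}, which is exactly why those identities were isolated in advance. When $c_0=0$ the variable $\kappa_3=0$ and the $\xi$-equation \eqref{e2.5} becomes a pure incompressibility-type constraint $\div\bu=\kappa_1\eta$, so the Galerkin pair $(\bV_n,S_n)$ must satisfy a discrete inf-sup condition; this is the only delicate point, and it can be secured by choosing classical Stokes-stable pairs. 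Once these two issues are handled, the rest of the argument is a routine Galerkin plus compactness procedure relying on the already-proved estimates.
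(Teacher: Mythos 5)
Your proposal follows essentially the same route as the paper's (outline) proof: Galerkin plus compactness for existence, with the energy laws of Lemmas \ref{lem2.1}--\ref{lem2.2} supplying the uniform bounds, and linearity plus the energy identity for uniqueness. You fill in a detail the paper glosses over---namely that when $c_0=0$ the energy law only kills $\bu$, $\eta$, and $\nabla p$, so the conserved means of Lemma \ref{lem2.4} are needed to pin down the constants in $p$ and $\xi$---which is a correct and worthwhile refinement, not a deviation in strategy.
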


\begin{proof}
We only outline the main steps of the proof and leave the details to the interested reader. 

First, since the PDE system is linear, the existence of weak solution can be proved by
the standard Galerkin method and compactness argument (cf. \cite{temam}). We note that 
the energy laws established in Lemmas \ref{lem2.1} and \ref{lem2.2} guarantee the required
uniform estimates for the Galerkin approximate solutions. 

Second, to show the uniqueness, suppose there are two sets of weak solutions, again by the 
linearity of the PDE system it is trivial to show that the difference of the solutions satisfy the 
same PDE system with {\em zero} initial and boundary data. The energy law immediately 
implies that the difference must be zero, hence, the uniqueness is verified. 
\end{proof}

We conclude this section by establishing a convergence result for the solution of 
problem \reff{e1.7}--\reff{e1.9},\reff{e1.4a}--\reff{e1.4c} when the constrained specific 
storage coefficient $c_0$ tends to $0$. Such a convergence result is useful and significant
for the following two reasons.  First, as mentioned earlier, the poroelasticity model
studied in this paper reduces into the well-known Biot's consolidation model from 
soil mechanics (cf. \cite{murad,pw07}) and Doi's model for polymer gels (cf. \cite{yd04b,fh10}).
Second, it proves that the proposed approach and methods of this paper are robust under such 
a limit process.
 
\begin{theorem}\label{thm2.6}
Let $\bu_0\in\bH^1(\Ome), \bbf\in\bL^2(\Omega),
\bbf_1\in \bL^2(\p\Ome), p_0\in L^2(\Ome), \phi\in L^2(\Ome)$, and $\phi_1\in L^2(\p\Ome)$.
Suppose $(\bbf,\bv)+\langle \bbf_1, \bv \rangle =0$ for any $\bv\in \mathbf{RM}$.
Let $(\bu_{c_0},\eta_{c_0},\xi_{c_0},p_{c_0},q_{c_0})$ denote the unique weak 
solution to problem \reff{e1.7}--\reff{e1.9},\reff{e1.4a}--\reff{e1.4c}. Then there exists
$(\bu_*, \eta_*,\xi_*, p_*, q_*)\in \bL^\infty(0,T;\bH^1_\perp(\Ome))\times L^\infty(0,T;L^2(\Ome))\times 
L^\infty(0,T; L^2(\Ome)))\times L^\infty(0,T;L^2(\Ome))\cap L^2(0,T;H^1(\Ome)\times L^\infty(0,T;L^2(\Ome))$ 
such that $(\bu_{c_0},\eta_{c_0},\xi_{c_0},p_{c_0},q_{c_0})$ converges weakly to
$(\bu_*, \eta_*,\xi_*, p_*, q_*)$ in the above product space as $c_0\to 0$.
\end{theorem}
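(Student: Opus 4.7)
The plan is to combine the $c_0$-uniform a priori estimates from Lemma \ref{estimates} with the conservation relations of Lemma \ref{lem2.4} and the inf-sup condition \eqref{e2.0a} to extract weakly convergent subsequences, and then pass to the limit in the weak formulation \eqref{e2.4}--\eqref{e2.9}. Because the PDE system is linear, weak convergence is sufficient to identify the limit, so no compactness argument is strictly needed.

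First I would establish $c_0$-uniform bounds. Note that as $c_0\to 0$, $\kappa_1\to 1/\alpha$ and $\kappa_2\to \lam/\alpha^2$ are bounded, while $\kappa_3\to 0$. Lemma \ref{estimates} therefore yields, uniformly in $c_0$, bounds on $\|\varepsilon(\bu_{c_0})\|_{L^\infty(L^2)}$, $\|\eta_{c_0}\|_{L^\infty(L^2)}$, $\|\nab p_{c_0}\|_{L^2(L^2)}$, together with $\|\bu_{c_0}\|_{L^\infty(L^2)}$ via \eqref{e4.1+0}. The delicate quantity is $\xi_{c_0}$, since the estimate only controls $\sqrt{\kappa_3}\,\xi_{c_0}$. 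To obtain a uniform $L^\infty(0,T;L^2(\Ome))$ bound on $\xi_{c_0}$, I would split $\xi_{c_0}=\xi_{c_0}^0+\overline{\xi}_{c_0}$ with $\xi_{c_0}^0\in L^2_0(\Ome)$ and constant $\overline{\xi}_{c_0}=|\Ome|^{-1}C_{\xi}(t)$. The mean is uniformly bounded in $c_0$ by Lemma \ref{lem2.4}, since $\mu\kappa_1 C_\eta(t)$ stays bounded as $c_0\to 0$. For $\xi_{c_0}^0$, I would apply the inf-sup condition \eqref{e2.0a} to equation \eqref{e2.4}, obtaining
\begin{align*}
\alpha_1\|\xi_{c_0}^0(t)\|_{L^2(\Ome)} \le \mu\|\varepsilon(\bu_{c_0}(t))\|_{L^2(\Ome)} +\|\bbf\|_{L^2(\Ome)}+C\|\bbf_1\|_{L^2(\p\Ome)}.
\end{align*}
Combining these yields a uniform $L^\infty(0,T;L^2(\Ome))$ bound on $\xi_{c_0}$. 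The identities \eqref{e2.7} then give uniform bounds on $p_{c_0}$ and $q_{c_0}$ in $L^\infty(0,T;L^2(\Ome))$, and together with the gradient bound, $p_{c_0}$ is uniformly bounded in $L^2(0,T;H^1(\Ome))$. From Lemma \ref{lem2.2} we also inherit a uniform bound on $\eta_{c_0,t}$ in $L^2(0,T;H^{-1}(\Ome))$.

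Next I would extract weakly$^*$ (or weakly) convergent subsequences $\bu_{c_0}\rightharpoonup \bu_*$, $\eta_{c_0}\rightharpoonup \eta_*$, $\xi_{c_0}\rightharpoonup \xi_*$, $p_{c_0}\rightharpoonup p_*$, $q_{c_0}\rightharpoonup q_*$ in the spaces indicated in the theorem, with $\eta_{c_0,t}\rightharpoonup \eta_{*,t}$ in $L^2(0,T;H^{-1}(\Ome))$. Passage to the limit in \eqref{e2.4} is immediate by linearity; in \eqref{e2.5} the term $\kappa_3(\xi_{c_0},\varphi)\to 0$ and $\kappa_1\to 1/\alpha$, giving $(\div\bu_*,\varphi)=\frac{1}{\alpha}(\eta_*,\varphi)$; in \eqref{e2.6} the coefficients $\kappa_1,\kappa_2$ simply converge to their limits, so after testing with $\psi\in H^1(\Ome)$ and noting that the spatial term is linear in $\xi_{c_0}$ and $\eta_{c_0}$, we pass to the limit term by term. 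The algebraic relations \eqref{e2.7} pass to the limit pointwise in $c_0$. For the initial conditions, the bound $\eta_{c_0,t}\in L^2(0,T;H^{-1}(\Ome))$ gives $\eta_{c_0}\in C([0,T];H^{-1}(\Ome))$ uniformly, so $\eta_{c_0}(0)$ (which equals $c_0 p_0+\alpha\div\bu_0$) converges to $\eta_*(0)=\alpha\div\bu_0$; similarly the Stokes-type equation at $t=0$ identifies $\bu_*(0)=\bu_0$, while $p_*(0)=p_0$ is preserved in the limit because $c_0\to 0$ makes the initial pressure a passive datum.

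The main obstacle is the one already highlighted: obtaining a uniform $L^\infty(0,T;L^2(\Ome))$ bound on $\xi_{c_0}$, because the natural energy estimate degenerates like $\sqrt{\kappa_3}$. The resolution via the inf-sup condition requires handling the mean of $\xi_{c_0}$ separately, and this is precisely where the conserved quantity $C_\xi(t)$ from Lemma \ref{lem2.4} is indispensable—without it the constant part of $\xi_{c_0}$ could in principle blow up as $c_0\to 0$. A secondary minor subtlety is that the weak limit $\eta_*$ should be shown to inherit the correct boundary behavior implicit in \eqref{e2.010}; this is automatic because the weak form \eqref{e2.6} is preserved in the limit and already encodes that boundary condition variationally.
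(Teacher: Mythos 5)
Your proposal is correct and follows the same overall strategy as the paper: establish $c_0$-uniform a priori bounds, extract weakly convergent subsequences, pass to the limit in the linear weak formulation, and identify the limit with the Biot system. However, you handle one crucial step more carefully than the paper does. The paper's proof reads off the $c_0$-uniform $L^\infty(L^2)$ bounds on $p_{c_0}$ and $q_{c_0}$ directly from Lemma~\ref{estimates}, in particular from the estimate \eqref{e2.15e}; but that estimate's stated derivation (``follows from \eqref{e2.15d}, \eqref{e4.1+} and $p=\kappa_1\xi+\kappa_2\eta$'') does not obviously give a bound that stays finite as $c_0\to 0$, since the energy estimate only controls $\sqrt{\kappa_3}\,\xi_{c_0}$ and $\kappa_1/\sqrt{\kappa_3}\to\infty$. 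The paper's bullet list in the proof of Theorem~\ref{thm2.6} correspondingly never asserts a $c_0$-uniform bound on $\xi_{c_0}$ itself, only on $\sqrt{\kappa_3}\xi_{c_0}$, and only states $\kappa_3\xi_{c_0}\to 0$ strongly — leaving the claimed weak limit $\xi_*$ less than fully justified. Your route through the inf-sup condition \eqref{e2.0a} and the conserved mean $C_\xi(t)$ from Lemma~\ref{lem2.4} supplies exactly the missing ingredient: the mean-free part of $\xi_{c_0}$ is controlled by $\mu\|\varepsilon(\bu_{c_0})\|_{L^2}$ and the data via \eqref{e2.4}, and the mean is a known bounded function of $t$. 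This gives a genuine, $c_0$-uniform $L^\infty(L^2)$ bound on $\xi_{c_0}$, from which the $p_{c_0}$ and $q_{c_0}$ bounds then follow cleanly through \eqref{e2.7}.

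Two small technical remarks. First, when you apply \eqref{e2.0a} to \eqref{e2.4}, the term $(\div\bv,\xi_{c_0}^0)$ differs from $(\div\bv,\xi_{c_0})$ by $\overline{\xi}_{c_0}(\div\bv,1)$, so either this correction must be carried along (bounded by the conservation law and the trace inequality) or one should invoke the constructive version (Lemma~2.1 of \cite{brenner}): pick $\bv\in\bH^1_\perp(\Ome)$ with $\div\bv=\xi_{c_0}^0$, in which case $(\xi_{c_0},\div\bv)=\|\xi_{c_0}^0\|_{L^2}^2$ exactly and the mean drops out. Second, your observation that compactness is not strictly needed because the system is linear is correct and is a simplification relative to the paper, which invokes the Aubin--Lions lemma. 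One point the paper includes that you omit: after establishing subsequence convergence, the uniqueness of the limiting Biot solution is invoked to upgrade to convergence of the full family $c_0\to 0$, which is what the theorem asserts. Finally, your remark that $p_*(0)=p_0$ ``is preserved in the limit'' should be dropped — in the $c_0=0$ limit the pressure loses its initial datum (there is no $c_0 p_t$ term), and indeed the limit system the paper writes down prescribes only $\bu_*(0)$ and $q_*(0)$, not $p_*(0)$. This does not affect the theorem, which makes no claim about $p_*(0)$.
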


\begin{proof}
It follows immediately from \eqref{e2.15c}--\eqref{e2.15e} and Korn's inequality that 
\begin{itemize}
\item $\bu_{c_0}$ is uniformly bounded (in $c_0$) in $\bL^\infty(0,T; \bH^1_\perp(\Ome))$.
\item $\sqrt{\kappa_2} \eta_{c_0}$ is uniformly bounded (in $c_0$) in 
$L^\infty(0,T; L^2(\Ome))\cap L^2(0,T; H^{-1}(\Ome))$.
\item $\sqrt{\kappa_3}\xi_{c_0}$ is  uniformly bounded (in $c_0$) in $L^\infty(0,T; L^2(\Ome))$.
\item $p_{c_0}$ is uniformly bounded (in $c_0$) in $L^\infty(0,T; L^2(\Ome)) \cap L^2(0,T; H^1(\Ome))$.
\item $q_{c_0}$ is uniformly bounded (in $c_0$) in $L^\infty(0,T; L^2(\Ome))$.
\end{itemize}

On noting that $\lim_{c_0\to 0}\kappa_1=\frac{1}{\alpha}$, 
$\lim_{c_0\to 0}\kappa_2=\frac{\lam}{\alpha^2}$ and $\lim_{c_0\to 0}\kappa_3=0$,
by the weak compactness of reflexive Banach spaces and Aubin-Lions Lemma \cite{dautray}
we have that there exist 
$(\bu_*, \eta_*,\xi_*, p_*, q_*)\in \bL^\infty(0,T;\bH^1_\perp(\Ome))
\times L^\infty(0,T;L^2(\Ome))\times L^\infty(0,T; L^2(\Ome)))\times 
L^\infty(0,T;$ $L^2(\Ome))\cap L^2(0,T;H^1(\Ome)\times L^\infty(0,T;L^2(\Ome))$ and
a subsequence of $(\bu_{c_0},\eta_{c_0},\xi_{c_0},$ $p_{c_0},q_{c_0})$ (still denoted by the 
same notation) such that as $c_0\to 0$ (a subsequence of $c_0$, to be exact)
\begin{itemize}
\item $\bu_{c_0}$ converges to $\bu_*$ weak $*$ in $\bL^\infty(0,T; \bH^1_\perp(\Ome))$ and 
weakly in $\bL^2(0,T; \bH^1_\perp(\Ome))$.
\item $\sqrt{\kappa_2} \eta_{c_0}$ converges to $\frac{\sqrt{\lam}}{\alpha} \eta_*$ weak $*$ in 
$L^\infty(0,T; L^2(\Ome))$ and strongly in $L^2(\Ome_T)$.
\item $\kappa_3\xi_{c_0}$ converges to $0$ strongly in  $L^2(\Ome_T)$. 
\item $p_{c_0}$ converges to $p_*$ weak $*$ in $L^\infty(0,T; L^2(\Ome))$ and weakly in 
$L^2(0,T; H^1(\Ome))$.
\item $q_{c_0}$ converges to $p_*$ weak $*$ in $L^\infty(0,T; L^2(\Ome))$ and weakly in 
$L^2(\Ome_T)$.
\end{itemize}
Then setting $c_0\to 0$ in \eqref{e2.4}--\eqref{e2.9} yields (note that the dependence of the
solution on $c_0$ is suppressed there)
\begin{alignat*}{2}
\mu \bigl(\vepsi(\bu_*), \vepsi(\bv) \bigr)-\bigl( \xi_*, \div \bv \bigr)
&= (\bbf, \bv)+\langle \bbf_1,\bv\rangle &&\qquad\forall \bv\in \bH^1(\Ome), \\
\bigl(\div\bu_*, \varphi \bigr) &= \frac{1}{\alpha}\bigl(\eta_*, \varphi \bigr)
&&\qquad\forall \varphi \in L^2(\Ome),  \\
\bigl((\eta_*)_t, \psi \bigr)_{\rm dual}
+\frac{1}{\mu_f} \bigl(K(\nab (\alpha^{-1} +\lambda\alpha^{-2}\eta_*) &-\rho_f\bg), \nab \psi \bigr) &&\\
&= (\phi, \psi)+\langle \phi_1,\psi\rangle &&\qquad\forall \psi \in H^1(\Ome) ,  \\
p_*:=\frac{1}{\alpha}\xi_* +\frac{\lam}{\alpha^2}\eta_*, \qquad
&q_*:=\frac{1}{\alpha} \eta_*, &&  \\
&\bu_*(0) = \bu_0, \qquad  &&  \\
q_*(0)=q_0:=\div \bu_0,\quad \quad \eta_*(0)= \eta_0 :&=\alpha q_0.  && 
\end{alignat*}
Equivalently,
\begin{alignat*}{2}
\mu \bigl(\vepsi(\bu_*), \vepsi(\bv) \bigr)-\bigl( \xi_*, \div \bv \bigr)
&= (\bbf, \bv)+\langle \bbf_1,\bv\rangle
&&\quad\forall \bv\in \bH^1(\Ome), \\
\bigl(\div\bu_*, \varphi \bigr) &= \bigl(q_*, \varphi \bigr)
&&\quad\forall \varphi \in L^2(\Ome),  \\
\alpha \bigl((q_*)_t, \psi \bigr)_{\rm dual}
+\frac{1}{\mu_f} \bigl(K(\nab p_*-\rho_f\bg), \nab \psi \bigr)
&= (\phi, \psi)+\langle \phi_1,\psi\rangle
&&\quad\forall \psi \in H^1(\Ome) ,  \\
p_*:=\frac{1}{\alpha} \Bigl(\xi_* +\lam q_* \Bigr) \quad\mbox{or}\quad 
&\xi_*=\alpha p_*-\lam q_*, \qquad  &&  \\
&\bu_*(0) = \bu_0, \qquad  &&  \\
&q_*(0)=q_0:=\div \bu_0.\qquad   && 
\end{alignat*}
Hence, $(\bu_*, \eta_*,\xi_*,p_*,q_*)$ is a weak solution of Biot's consolidation model 
(cf. \cite{yd04b,fh10}). By the uniqueness of its solutions,  we conclude that the whole sequence 
$(\bu_{c_0},\eta_{c_0},\xi_{c_0},$ $p_{c_0},q_{c_0})$ converges to $(\bu_*, \eta_*,\xi_*,p_*,q_*)$ 
as $c_0\to 0$ in the above sense. The proof is complete.
\end{proof}

\section{Fully discrete finite element methods}\label{sec-3}
The goal of this section is to design and analyze some fully discrete finite
element methods for the poroelasticity model based on the above new formulation.
As the time stepping is vital for the overall methods, we first introduce our 
time-stepping schemes at the PDE level.

\subsection{Basic time-stepping algorithm}\label{sec-3.1}

Based on this new formulation, our multiphysics time-stepping  algorithm
reads as follows:

\bigskip
{\bf Splitting Algorithm (SA):} \\

\begin{itemize}
\item[(i)]  Set
\begin{alignat*}{3}
\bu^0 &=\bu_0, &&\qquad p^0 =p_0, \quad q^0 =q_0:=\div \bu_0, \\
\eta^0 &=c_0p^0+\alpha q^0, &&\qquad \xi^0=\alpha p^0-\lambda q^0.
\end{alignat*}

\item[(ii)] For $n=0,1,2, \cdots$,  complete the following three steps:\\

{\em Step 1:} Solve for $(\bu^{n+1},\xi^{n+1})$ such that
\begin{alignat}{2} \label{e1.19}
-\mu \div\vepsi(\bu^{n+1}) + \nab \xi^{n+1} &=\bbf,\quad
&&\qquad\mbox{in } \Ome_T, \\
\kappa_3\xi^{n+1} +\div \bu^{n+1} &=\kappa_1 \eta^{n+\theta} &&\qquad\mbox{in } \Ome_T,  \label{e1.20} \\
\widetilde{\sigma}(\bu^{n+1},\xi^{n+1})\bn &=\bbf_1
&&\qquad\mbox{on } \p\Ome_T.  \label{e1.21}
\end{alignat}
Here $\theta=0$ or $1$.

\medskip
{\em Step 2:} Solve for $\eta^{n+1}$ such that
\begin{alignat}{2} \label{e1.23}
d_t \eta^{n+1}-\frac{1}{\mu_f}\div[K(\nab(\kappa_2\eta^{n+1}
+\kappa_1\xi^{n+1}) -\rho_f \bg)] &=\phi, &&\qquad\mbox{in } \Ome_T,   \\
\frac{1}{\mu_f} K [\nab(\kappa_2\eta^{n+1}
+\kappa_1\xi^{n+1}) -\rho_f\bg]\cdot \bn
&=\phi_1 &&\qquad\mbox{on } \p\Ome_T.  \label{e1.24}
\end{alignat}

{\em Step 3:} Update $p^{n+1}$ and $q^{n+1}$ by
\begin{align}\label{e1.26}
p^{n+1} = \kappa_1\xi^{n+1} + \kappa_2\eta^{n+\theta}, \qquad
q^{n+1} = \kappa_1\eta^{n+1}-\kappa_3\xi^{n+1}.
\end{align}
Where $d_t \eta^{n+1}:= (\eta^{n+1}-\eta^n)/{\Del t}$,
$\Del t$ denotes the time step size of a uniform partition
of the time interval $[0,T]$, and
\begin{align}\label{e1.27}
\widetilde{\sigma}(\bu^{n+1},\xi^{n+1}):=\mu\vepsi(\bu^{n+1})- \xi^{n+1}I.
\end{align}
\end{itemize}
We note that \eqref{e1.23} is the implicit Euler scheme, which
is chosen just for the ease of presentation, it can be replaced by
other time-stepping schemes. \eqref{e1.24} provides a flux
boundary condition for $\eta^{n+1}$.

\begin{remark}
When $\theta=0$, {\em Step 1} and {\em Step 2} are decoupled, hence these two sub-problems can be solved
independently. On the other hand, when $\theta=1$, two sub-problems are coupled, 
hence, they must be solved together. 
\end{remark}

\subsection{Fully discrete finite element methods}\label{sec-3.2}

In this section, we consider the space-time discretization which combines the above  
splitting algorithm with appropriately chosen spatial discretization methods. To 
the end, we introduce some notation.
Assume $\Omega\in\mathbb{R}^d (d=2, 3)$ is a polygonal domain. Let $\mathcal{T}_h$ be a 
quasi-uniform triangulation or rectangular partition of $\Omega$ with mesh size $h$, 
and $\bar{\Omega}=\bigcup_{K\in\mathcal{T}_h}\bar{K}$. Also, let $(\bX_h, M_h)$ be a stable 
mixed finite element pair, that is, $\bX_h\subset \bH^1(\Omega)$ and $M_h\subset L^2(\Omega)$ 
satisfy the inf-sup condition
\begin{alignat}{2}\label{1407-1}
\sup_{v_h\in \bX_h}\frac{({\rm div} v_h, \varphi_h)}{\|\nabla v_h\|_{L^2(\Ome)}}
\geq \beta_0\|\varphi_h\|_{L^2(\Ome)} &&\quad \forall\varphi_h\in M_{0h}:=M_h\cap L_0^2(\Omega),\ \beta_0>0.
\end{alignat}

A number of stable mixed finite element spaces $(\bX_h, M_h)$ have been known in the literature
\cite{brezzi}.  A well-known example is the following
so-called Taylor-Hood element (cf. \cite{ber,brezzi}):
\begin{align*}
\bX_h &=\bigl\{\bv_h\in \bC^0(\overline{\Ome});\,
\bv_h|_K\in \bP_2(K)~~\forall K\in \cT_h \bigr\}, \\
M_h &=\bigl\{\varphi_h\in C^0(\overline{\Ome});\, \varphi_h|_K\in P_1(K)
~~\forall K\in \cT_h \bigr\}.
\end{align*}
In the next subsection, we shall only present the analysis for the
Taylor-Hood element, but remark that the analysis can be 
extended to other stable mixed elements. However, piecewise constant
space $M_h$ is not recommended because that would result in no
rate of convergence for the approximation of the pressure $p$ (see
Section \ref{sec-3.4}).

Finite element approximation space $W_h$ for $\eta$ variable can be chosen
independently, any piecewise polynomial space is acceptable provided that
$W_h \supset M_h$. Especially, $W_h\subset L^2(\Ome)$ can be
chosen as a fully discontinuous piecewise polynomial space, although
it is more convenient to choose $W_h$ to be a continuous (resp.
discontinuous) space if $M_h$ is a continuous (resp. discontinuous)
space. The most convenient choice is $W_h =M_h$, which
will be adopted in the remainder of this paper.

Recall that $\bRM$ denotes the space of the infinitesimal rigid motions
(see Section \ref{sec-2}), evidently, $\bRM\subset \bX_h$. We now
introduce the $L^2$-projection $\mathcal{P}_R$ from $\bL^2(\Ome)$
to $\bRM$. For each $\bv\in \bL^2(\Ome)$, $\mathcal{P}_R\bv_h\in \bRM$ is defined by
\[
(\mathcal{P}_R\bv_h,\br)=(\bv_h,\br) \qquad\forall \br\in \bRM.
\]
Moreover, we define
\begin{equation}\label{e3.50}
\bV_h:=(I-\mathcal{P}_R)\bX_h =\bigl\{\bv_h\in \bX_h;\,  (\bv_h,\br)=0\,\,
\forall \br\in \bRM \bigr\}.
\end{equation}
It is easy to check that $\bX_h=\bV_h\bigoplus \bRM$. It was proved in \cite{fh10}
that there holds the following alternative version of the above inf-sup condition:
\begin{align}\label{e3.51}
\sup_{\bv_h\in \bV_h}\frac{(\div\bv_h,\varphi_h)}{\norm{\nab \bv_h}{L^2(\Ome)}} 
\geq \beta_1 \norm{\varphi_h}{L^2(\Ome)} \quad \forall \varphi_h\in M_{0h}, \quad \beta_1>0.
\end{align}

\bigskip
{\bf Finite Element Algorithm (FEA):} \\

\begin{itemize}
\item[(i)]
Compute $\bu^0_h\in \bV_h$ and $q^0_h\in W_h$ by
\begin{alignat*}{3}
\bu^0_h &=\mathcal{R}_h\bu_0, &&\qquad p^0_h =\mathcal{Q}_hp_0, \qquad
 q^0_h &=\mathcal{Q}_hq_0 \ (q_0 =\div \bu_0), \\
\eta^0_h &=c_0p^0_h+\alpha q^0_h, &&\qquad 
\xi_h^0 =\alpha p_h^0 -\lambda q_h^0.
\end{alignat*}

\item[(ii)] For $n=0,1,2, \cdots$,  do the following three steps.

{\em Step 1:} Solve for $(\bu^{n+1}_h,\xi^{n+1}_h)\in \bV_h\times W_h$ such that
\begin{alignat}{2}\label{e3.1}
&\mu \bigl(\vepsi(\bu^{n+1}_h), \vepsi(\bv_h) \bigr)-\bigl( \xi^{n+1}_h, \div \bv_h \bigr)
= (\bbf, \bv_h)+\langle \bbf_1,\bv_h\rangle &&\quad \forall \bv_h\in \bV_h, \\
&\kappa_3\bigl(\xi^{n+1}_h, \varphi_h \bigr) +\bigl(\div\bu^{n+1}_h, \varphi_h \bigr)
=\kappa_1\bigl( \eta^{n+\theta}_h, \varphi_h \bigr)  
&&\quad \forall \varphi_h \in M_h. \label{e3.2}
\end{alignat}
{\em Step 2:} Solve for $\eta^{n+1}_h\in W_h$ such that
\begin{alignat}{2}\label{e3.4}
\bigl(d_t\eta^{n+1}_h, \psi_h \bigr)
&+\frac{1}{\mu_f} \bigl(K(\nab (\kappa_1\xi^{n+1}_h 
+\kappa_2\eta^{n+1}_h)-\rho_f\bg,\nab\psi_h \bigr)\\
&=(\phi, \psi_h)+\langle \phi_1,\psi_h\rangle. \no
\end{alignat}
{\em Step 3:} Update $p^{n+1}_h$ and $q^{n+1}_h$ by
\begin{alignat}{2}
p^{n+1}_h&=\kappa_1\xi^{n+1}_h +\kappa_2\eta^{n+\theta}_h, \quad
q^{n+1}_h=\kappa_1\eta^{n+1}_h-\kappa_3\xi^{n+1}_h.  \label{e3.5}
\end{alignat}
\end{itemize}

\begin{remark}\label{rem3.1}
At each time step, problem \reff{e3.1}--\reff{e3.2} is a generalized Stokes problem with 
a mixed boundary condition for $(\bu,p)$. The well-posedness of the generalized Stokes 
problem follows easily with the help of the inf-sup condition.
%
%
\end{remark}

\subsection{Stability analysis of fully discrete finite element methods}\label{sec-3.3}

The primary goal of this subsection is to derive a discrete energy law which mimics 
the PDE energy law \reff{e2.10}. It turns out that such a discrete energy law only 
holds if $h$ and $\Delta t$ satisfy the mesh constraint $\Delta t=O(h^2)$ when
$\theta=0$ but for all $h,\Delta t>0$ when $\theta=1$.

Before discussing the stability of (FEA), We first show that the numerical solution 
satisfies all side constraints which are fulfilled by the PDE solution. 
\begin{lemma}\label{lma3.1}
Let $\{(\bu_h^{n}, \xi_h^{n}, \eta_h^{n})\}_{n\geq 0}$ be defined by the (FEA), then there hold
\begin{alignat}{2}
(\eta^{n}_h, 1)&=C_\eta(t_n) &&\qquad\mbox{for } n=0, 1, 2, \cdots,\label{e3.12}\\
(\xi^{n}_h, 1)&=C_\xi(t_{n-1+\theta}) &&\qquad\mbox{for }  n=1-\theta, 1, 2, \cdots, \label{e3.11} \\
\langle\bu^{n}_h\cdot\bn, 1\rangle &=C_{\bu}(t_{n-1+\theta})
&&\qquad\mbox{for } n=1-\theta, 1, 2, \cdots.\label{e3.13}
\end{alignat}
\end{lemma}

\begin{proof}
Taking $\psi_h=1$ in \eqref{e3.4} yields
\begin{align*}
\bigl(d_t\eta_h^{n+1}, 1\bigr) =(\phi,1) + \langle \phi_1, 1\rangle .
\end{align*}
Then summing over $n$ from $0$ to $\ell \,(\geq 0)$ we get
\begin{align*}
(\eta_h^{\ell+1},1)=(\eta_h^{0},1) + \bigl[(\phi,1) + \langle \phi_1, 1\rangle\bigr] t_{\ell+1}
=(\eta_0,1) + \bigl[(\phi,1) + \langle \phi_1, 1\rangle\bigr] t_{\ell+1}
=C_{\eta}(t_{\ell+1}) 
\end{align*}
for $\ell=0,1,2,\cdots.$ So \eqref{e3.12} holds.

To prove \eqref{e3.11}, taking $\bv_h=\bx$ in \eqref{e3.1} and $\varphi_h=1$ in \eqref{e3.2}, we get
\begin{align}
\mu \bigl( \div \bu_h^{n+1}, 1\bigr) -d\bigl( \xi_h^{n+1}, 1\bigr) 
&=\bigl( \bbf, x\bigr)+ \langle \bbf_1,x\rangle,\label{add_1}\\
\kappa_3\bigl(\xi_h^{n+1}, 1\bigr) +\bigl( \div \bu_h^{n+1}, 1\bigr) &=\kappa_1 C_\eta(t_{n+\theta}). \label{add_2}
\end{align}
Substituting \eqref{add_2} into \eqref{add_1} yields
\[
(d+\mu \kappa_3) \bigl( \xi_h^{n+1}, 1\bigr) = \mu\kappa_1 C_\eta(t_{n+\theta}) 
-\bigl( \bbf, x\bigr)- \langle \bbf_1,x\rangle.
\]
Hence, by the definition of $C_\xi(t)$ we conclude that \eqref{e3.11} holds for all $n\geq 1-\theta$. 

\eqref{e3.13} follows from \eqref{e3.12}, \eqref{e3.11}, \eqref{add_2}, and
an application of the Divergence Theorem. The proof is complete.
\end{proof}

The next lemma establishes an identity which mimics the continuous energy law for the solution 
of (FEA).

\begin{lemma}\label{lma3.2}
Let $\{(\bu_h^{n}, \xi_h^{n}, \eta_h^{n})\}_{n\geq 0}$ be defined by  (FEA),
then there holds the following identity:
\begin{align}\label{discrete_energy}
J_{h,\theta}^{\ell} +  S_{h,\theta}^{\ell} =J_{h,\theta}^0 \qquad \mbox{for } \ell\geq 1,\,\, \theta=0,1,
\end{align}
where
\begin{align*}
&J_{h,\theta}^\ell :=\frac{1}{2} \bigg[\mu\|\vepsi(\bu^{\ell+1}_h)\|_{L^2(\Ome)}^2
+\kappa_2\|\eta_h^{\ell+\theta}\|_{L^2(\Ome)}^2+\kappa_3\|\xi_h^{\ell+1}\|_{L^2(\Ome)}^2  
-2\bigl(\bbf, \bu^{\ell+1}_h \bigr)-2\bigl\langle \bbf_1,\bu^{\ell+1}_h \bigr\rangle\bigg],\\
&S_{h,\theta}^\ell := \Delta t\sum_{n=1}^{\ell}\Bigg[\frac{\mu\Delta t}{2}\|d_t\vepsi(\bu^{n+1}_h)\|_{L^2(\Ome)}^2
+\frac{K}{\mu_f} \bigl(\nab p_h^{n+1}-\rho_f\bg, \nab p_h^{n+1} \bigr)\\
&\hskip 0.5in
+\frac{\kappa_2\Delta t}{2}\|d_t\eta_h^{n+\theta}\|_{L^2(\Ome)}^2
+\frac{\kappa_3\Delta t}{2}\|d_t\xi_h^{n+1}\|_{L^2(\Ome)}^2-(\phi, p_h^{n+1})-\langle \phi_1,p_h^{n+1}\rangle \\
&\hskip 0.5in
-(1-\theta)\frac{\kappa_1K \Delta t}{\mu_f}\bigl(d_t\nab\xi_h^{n+1},\nab p_h^{n+1}\bigr)  \Bigg].\\
&p^{n+1}_h:=\kappa_1\xi^{n+1}_h +\kappa_2\eta^{n+\theta}_h.
\end{align*}
\end{lemma}

\begin{proof}
Since the proof for the case $\theta=1$ is exactly same as that of the PDE energy law, so we omit it and 
leave it to the interested reader to explore. Here we only consider the case $\theta=0$. Based on \eqref{e3.2}, we can define $\eta_h^{-1}$ by
\begin{align}
\kappa_1\bigl( \eta^{-1}_h, \varphi_h \bigr)=
\kappa_3\bigl(\xi^{0}_h, \varphi_h \bigr) +\bigl(\div\bu^{0}_h, \varphi_h \bigr)
 \end{align}
Setting $\bv_h=d_t\bu_h^{n+1}$ in \eqref{e3.1}, $\varphi_h=\xi_h^{n+1}$ in \eqref{e3.2}, 
and $\psi_h= p_h^{n+1}$ in \eqref{e3.4} after lowing the degree from $n+1$ to $n$, we get  
\begin{align}\label{add_3}
&\frac{\mu}{2}d_t \|\vepsi(\bu^{n+1}_h)\|_{L^2(\Ome)}^2 
+\frac{\mu}{2}\Delta t\|d_t\vepsi(\bu^{n+1}_h)\|_{L^2(\Ome)}^2 \\
&\hskip 1.2in
= d_t(\bbf, \bu^{n+1}_h)+d_t\langle \bbf_1,\bu^{n+1}_h\rangle+(\xi_h^{n+1},\div d_t\bu_h^{n+1}),\nonumber\\
&\kappa_3\bigl(d_t\xi^{n+1}_h, \xi_h^{n+1} \bigr) +\bigl(\div d_t\bu^{n+1}_h, \xi_h^{n+1} \bigr) 
=\kappa_1 \bigl( d_t\eta^{n}_h, \xi_h^{n+1} \bigr),  \label{add_4}\\
&\bigl(d_t\eta^{n}_h, p_h^{n+1} \bigr)
+\frac{1}{\mu_f} \bigl(K(\nab (\kappa_1\xi^{n}_h 
+\kappa_2\eta^{n}_h)-\rho_f\bg), \nab p_h^{n+1} \bigr)\label{add_5}\\
&\hskip 1.2in
=(\phi, p_h^{n+1})+\langle \phi_1,p_h^{n+1}\rangle\notag.
\end{align}

The first term on the left-hand side of \eqref{add_5} can be rewritten as
\begin{align}\label{add_6}
\bigl(d_t\eta^{n}_h, p_h^{n+1} \bigr) &= \bigl(d_t\eta^{n}_h, \kappa_1\xi^{n+1}_h+\kappa_2 \eta^n_h \bigr) \\
&=\kappa_1 \bigl(d_t\eta^{n}_h, \xi^{n+1}_h \bigr)
+\frac{\kappa_2\Delta t}{2}\|d_t\eta_h^{n}\|_{L^2(\Ome)}^2 +\frac{\kappa_2}{2}d_t\|\eta_h^{n}\|_{L^2(\Ome)}^2. \no
\end{align}
Moreover,
\begin{align}\label{add_6a}
&\frac{K}{\mu_f} \bigl(\nab (\kappa_1\xi^{n}_h 
+\kappa_2\eta^{n}_h)-\rho_f\bg, \nab p_h^{n+1})\\
&\hskip 0.5in
=\frac{K}{\mu_f}(\nab p_h^{n+1}-\rho_f\bg,\nab p_h^{n+1})
-\frac{\kappa_1 K\Delta t}{\mu_f}\bigl(d_t\nab\xi_h^{n+1},\nab p_h^{n+1}\bigr).\notag \\
&\kappa_3\bigl(d_t\xi_h^{n+1},\xi_h^{n+1} \bigr) 
=\frac{\kappa_3}{2} d_t\| \xi_h^{n+1}\|_{L^2(\Ome)}^2 
+ \frac{\kappa_3\Delta t}{2} \|d_t\xi_h^{n+1}\|_{L^2(\Ome)}^2. \label{add_6b}
\end{align}

Adding \eqref{add_3}--\eqref{add_5}, using \eqref{add_6}--\eqref{add_6b} and applying the summation 
operator $\Delta t\sum_{n=1}^{\ell}$ to the both sides of the resulting equation yield the desired 
equality \eqref{discrete_energy}.  The proof is complete.
\end{proof}

In the case $\theta=1$, \eqref{discrete_energy} gives the desired solution estimates without
any mesh constraint. On the other hand, when $\theta=0$, since the last term in the expression of
$S^\ell_{h,\theta}$ does not have a fixed sign, hence, it needs to be controlled in order to 
ensure the positivity of $S^\ell_{h,\theta}$.

\begin{corollary}\label{cor_discrete_energy}
Let $\{(\bu_h^{n}, \xi_h^{n}, \eta_h^{n})\}_{n\geq 0}$ be defined by  (FEA) with $\theta=0$, 
then there holds the following inequality:
\begin{align}\label{20141126_add_0}
J_{h,0}^{\ell} +  \widehat{S}_{h,0}^{\ell} \leq J_{h,0}^0 \qquad \mbox{for } \ell\geq 1,
\end{align}
provided that $\Delta t=O(h^2)$.  Where
\begin{align*}
\widehat{S}_{h,0}^\ell := \Delta t\sum_{n=1}^{\ell}\Bigg[\frac{\mu\Delta t}{4}\|d_t\vepsi(\bu^{n+1}_h)\|_{L^2(\Ome)}^2
+\frac{K}{2\mu_f}\|\nab p_h^{n+1}\|_{L^2(\Ome)}^2
-\frac{K}{\mu_f} \bigl(\rho_f\bg, \nab p_h^{n+1} \bigr)\\
\hskip 0.5in
+\frac{\kappa_2\Delta t}{2}\|d_t\eta_h^{n}\|_{L^2(\Ome)}^2
+\frac{\kappa_3\Delta t}{2}\|d_t\xi_h^{n+1}\|_{L^2(\Ome)}^2-(\phi, p_h^{n+1})-\langle \phi_1,p_h^{n+1}\rangle\Bigg].
\end{align*}
\end{corollary}

\begin{proof}
By Schwarz inequality and inverse inequality \eqref{e3.13-00}, we get
\begin{align}\label{20141126_add_1}
\frac{\kappa_1K \Delta t}{\mu_f}\bigl(d_t\nab\xi_h^{n+1},\nab p_h^{n+1}\bigr)
&\leq\frac{\kappa_1^2K}{2\mu_f}\|\nab\xi_h^{n+1}-\nab\xi_h^{n}\|_{L^2(\Ome)}^2
+\frac{K}{2\mu_f}\|\nab p_h^{n+1}\|_{L^2(\Ome)}^2\\
&\leq\frac{c_1^2\kappa_1^2K}{2\mu_fh^2}\|\xi_h^{n+1}-\xi_h^{n}\|_{L^(\Ome)2}^2
+\frac{K}{2\mu_f}\|\nab p_h^{n+1}\|_{L^2(\Ome)}^2\no
\end{align}
To bound the first term on the right-hand side of \eqref{20141126_add_1}, we appeal to the inf-sup condition and get
\begin{align}\label{20141126_add_2}
\|\xi_h^{n+1}-\xi_h^{n}\|_{L^2}&\leq\frac{1}{\beta_1}\sup_{v_h\in \bV_h}\frac{\bigl(\div\bv_h,\xi_h^{n+1}
-\xi_h^{n}\bigr)}{\|\nabla \bv_h\|_{L^2(\Ome)}}\\
&\leq\frac{\mu}{\beta_1}\sup_{v_h\in \bV_h}\frac{\bigl(\varepsilon(\bu^{n+1}-\bu^{n}),\varepsilon(\bv_h) 
 \bigr)}{\|\nabla \bv_h\|_{L^2(\Ome)}}\no\\
&\leq\frac{\mu}{\beta_1}\Delta t\|d_t\varepsilon(\bu_h^{n+1})\|_{L^2}.\no
\end{align}

Substituting \eqref{20141126_add_2} into \eqref{20141126_add_1} and combining it with 
\eqref{discrete_energy} imply \eqref{20141126_add_0} provided that 
$\Delta t \leq (\mu_f\beta_1^2)(2\mu K c_1^2\kappa_1^2)^{-1} h^2$.  The proof is complete.
\end{proof}

\subsection{Convergence analysis}\label{sec-3.4}
The goal of this section is to analyze the fully discrete finite element algorithm (FEA) proposed
in the previous subsection. Precisely, we shall derive optimal order error estimates for 
(FEA) in both $L^\infty(0,T;L^2(\Ome)$ and $L^2(0,T; H^1(\Ome))$-norm. To the end, we first list
some facts, which are well known in the literature \cite{bs08, brezzi}, about finite element functions.

We first recall the following inverse inequality for polynomial
functions \cite{cia}:
\begin{alignat}{2}\label{e3.13-00}
\|\nabla\varphi_h\|_{L^2(K)}\leq c_1h^{-1} \|\varphi_h\|_{L^2(K)} 
\qquad \forall\varphi_h\in P_r(K), K\in T_h.
\end{alignat}

For any $\varphi\in L^2(\Omega)$, we define its $L^2$-projection $\mathcal{Q}_h: L^2\rightarrow W_h$ as
\begin{align}\label{e3.13-01}
\bigl( \mathcal{Q}_h\varphi, \psi_h  \bigr)=\bigl( \varphi, \psi_h  \bigr) \qquad \psi_h\in W_h.
\end{align}

It is well known that the projection operator $\mathcal{Q}_h: L^2\rightarrow W_h$ satisfies 
(cf \cite{bs08}), for any $\varphi\in H^s(\Omega) (s\geq1)$,
\begin{align}\label{e3.13-02}
\|\mathcal{Q}_h\varphi-\varphi\|_{L^2(\Ome)}+h\| \nabla(\mathcal{Q}_h\varphi
-\varphi) \|_{L^2(\Ome)}\leq Ch^\ell\|\varphi\|_{H^\ell(\Ome)}, \quad \ell=\min\{2, s\}.
\end{align}

We like to point out that when $W_h\notin H^1(\Omega)$, the second term on the left-hand side 
of \reff{e3.13-02} has to be replaced by the broken $H^1$-norm.

Next, for any $\varphi\in H^1(\Omega)$, we define its elliptic projection $\mathcal{S}_h\varphi$ by
\begin{alignat}{2}\label{e3.13-03}
\bigl(K\nabla\mathcal{S}_h\varphi, \nabla\varphi_h\bigr) &=\bigl(K\nabla\varphi, \nabla\varphi_h\bigr)
&& \quad \varphi_h\in W_h,\\
\bigl(\mathcal{S}_h\varphi, 1\bigr) &=\bigl(\varphi, 1\bigr).&&\label{e3.13-04}
\end{alignat}

It is well known that the projection operator $\mathcal{S}_h: H^1(\Omega)\rightarrow W_h$ 
satisfies (cf \cite{bs08}), for any $\varphi\in H^s(\Omega) (s>1)$,
\begin{align}\label{e3.13-05}
\|\mathcal{S}_h\varphi-\varphi\|_{L^2(\Ome)}+h\| \nabla(\mathcal{S}_h\varphi-\varphi) \|_{L^2(\Ome)}
\leq Ch^\ell\|\varphi\|_{H^\ell(\Ome)}, \quad \ell=\min\{2, s\}.
\end{align}

Finally, for any $\bv\in \bH^1_\perp(\Omega)$, we define its elliptic projection $\mathcal{R}_h\bv$ by
\begin{alignat}{2}\label{e3.13-06}
\bigl(\varepsilon(\mathcal{R}_h\bv), \varepsilon(\bw_h)\bigr)
=\bigl(\varepsilon(\bv), \varepsilon(\bw_h)\bigr) \quad \bw_h\in \bV_h.
\end{alignat}

It is easy to show that the projection $\mathcal{R}_h\bv$ satisfies (cf \cite{bs08}), for any 
$\bv\in \bH^1_\perp(\Omega)\cap \bH^s(\Omega) (s>1)$,
\begin{align}\label{e3.13-07}
\|\mathcal{R}_h\bv-\bv\|_{L^2(\Ome)}+h\| \nabla(\mathcal{R}_h\bv-\bv) \|_{L^2(\Ome)}
\leq Ch^m\|\bv\|_{H^m(\Ome)}, \quad m=\min\{3, s\}.
\end{align}

To derive error estimates, we introduce the following error notation
\begin{alignat*}{2}
E_{\bu}^n &:=\bu(t_n)-\bu_h^n, &&\qquad E_\xi^n:=\xi(t_n)-\xi_h^n,
\qquad E_\eta^n:=\eta(t_n)-\eta_h^n, \\
E_p^n &:=p(t_n)-p_h^n, &&\qquad E_q^n:=q(t_n)-q_h^n. 
\end{alignat*}
It is easy to check that
\begin{alignat}{2}\label{e3.14}
E_p^n=\kappa_1E_\xi^n+\kappa_2E_\eta^n, \qquad
E_q^n=\kappa_3E_\xi^n+\kappa_1E_\eta^n.
\end{alignat}

Also, we denote
\begin{align*}
E_{\bu}^n &=\bu(t_n)-\mathcal{R}_h(\bu(t_n))+\mathcal{R}_h(\bu(t_n))-\bu_h^n
:=\Lambda_{\bu}^n+\Theta_{\bu}^n,\\ 
E_\xi^n &=\xi(t_n)-\mathcal{S}_h(\xi(t_n)) +\mathcal{S}_h(\xi(t_n))-\xi_h^n
:=\Lambda_{\xi }^n+\Theta_{\xi }^n,\\
E_\eta^n &=\eta(t_n)-\mathcal{S}_h(\eta(t_n))+\mathcal{S}_h(\eta(t_n))-\eta_h^n
:=\Lambda_{\eta }^n+\Theta_{\eta }^n,\\
E_p^n &=p(t_n)-\mathcal{Q}_h(p(t_n))+\mathcal{Q}_h(p(t_n))-p_h^n
:=\Lambda_{p }^n+\Theta_{p }^n.
\end{align*}

\begin{lemma}\label{lma3.3}
Let $\{ (\bu_h^n, \xi_h^n, \eta_h^n) \}_{n\geq0}$ be generated by the (FEA) and 
$\Lambda_{\bu}^n, \Theta_{\bu}^n, \Lambda_{\xi }^n, \Theta_{\xi }^n, \Lambda_{\eta }^n$ 
and $\Theta_{\eta }^n$ be defined as above. Then there holds the following identity:
\begin{align}\label{e3.15-00}
&\mathcal{E}_h^\ell +\Delta t\sum_{n=1}^\ell \Bigl[\frac{K}{\mu_f} 
\bigl( \nab\hat{\Theta}_p^{n+1}-\rho_f\bg,\hat{\Theta}_p^{n+1} \bigr)\\
&\qquad
+\frac{\Delta t}{2} \Bigl(\mu\| d_t\varepsilon(\Theta_{\bu}^{n+1}) 
\|_{L^2(\Ome)}^2+\kappa_2\| d_t\Theta_\eta^{n+\theta} \|_{L^2(\Ome)}^2
+\kappa_3\| d_t\Theta_\xi^{n+1} \|_{L^2(\Ome)}^2 \Bigr) \Bigr] \no\\
&=\mathcal{E}_h^0+\Delta t\sum_{n=1}^\ell \Bigl[\bigl( \Lambda_\xi^{n+1}, \div d_t \Theta_{\bu}^{n+1}\bigr)
-\bigl( \div d_t \Lambda_{\bu}^{n+1}, \Theta_\xi^{n+1} \bigr) \Bigr]  \no\\
&\qquad
+(\Delta t)^2\sum_{n=1}^\ell\bigl( d_t^2 \eta_h(t_{n+1}), \Theta_\xi^{n+1} \bigr)
+\Delta t\sum_{n=1}^\ell \bigl( R_h^{n+1}, \hat{\Theta}_p^{n+1} \bigr)  \no\\
&\qquad
+(1-\theta)(\Delta t)^2 \sum_{n=1}^\ell \frac{K\kappa_1}{\mu_f} \bigl( d_t\nabla\Theta_{\xi}^{n+1}, 
\nabla\hat{\Theta}_p^{n+1} \bigr),\no
\end{align}
where
\begin{alignat}{2}\label{e3.15-01}
\hat{\Theta}_p^{n+1} &:=\kappa_1\nabla\Theta_{\xi}^{n+1}+\kappa_2\nabla\Theta_{\eta}^{n+\theta}\\
\mathcal{E}_h^\ell &:=\frac{1}{2} \Bigl[\mu\|\varepsilon(\Theta_{\bu}^{\ell+1})\|_{L^2(\Ome)}^2
+ \kappa_2\|\Theta_\eta^{\ell+\theta} \|_{L^2(\Ome)}^2+\kappa_3\|\Theta_\xi^{\ell+1}\|_{L^2(\Ome)}^2 \Bigr],\\
R_h^{n+1} &:=-\frac{1}{\Delta t}\int_{t_{n}}^{t_{n+1}}(s-t_{n})\eta_{tt}(s)\,ds.\label{e3.15-02}
\end{alignat}
\end{lemma}

\begin{proof}
Subtracting \reff{e3.1} from \reff{e2.4}, \reff{e3.2} from \reff{e2.5},  \reff{e3.4} from \reff{e2.6}, 
respectively, we get the following error equations:
\begin{alignat}{2}\label{e3.15}
&\mu \bigl(\vepsi(\bE^{n+1}_{\bu}), \vepsi(\bv_h) \bigr)-\bigl( E_\xi^{n+1}, \div \bv_h \bigr)= 0
&&\quad \forall \bv_h\in V_h, \\
&\kappa_3\bigl(E_{\xi}^{n+1},\varphi_h\bigr)+\bigl(\div\bE^{n+1}_{\bu}, \varphi_h \bigr) && \label{e3.16} \\
&\hskip 0.9in 
= \kappa_1\bigl( E^{n+\theta}_{\eta}, \varphi_h \bigr)+\Delta t\bigl(  d_t\eta(t_{n+1}), \varphi_h  \bigr)
&&\quad \forall \varphi_h \in M_h, \no \\
&\bigl(d_tE_\eta^{n+1}, \psi_h \bigr)
+\frac{K}{\mu_f} \bigl(\nab (\kappa_1E_\xi^{n+1} +\kappa_2E_\eta^{n+1})-\rho_f\bg, \nab \psi_h \bigr)
&&  \label{e3.18}\\
&\hskip 1.9in
=(R^{n+1}_h, \psi_h) &&\quad\forall \psi_h \in W_h,\no\\
&  E_{\bu}^0=0,\quad E_\xi^0=0, \quad E_\eta^{-1}=0. &&\label{e3.19}
\end{alignat}

Using the definition of the projection operators $\mathcal{Q}_h, \mathcal{S}_h, \mathcal{R}_h$, we have 
\begin{alignat}{2}\label{e3.20}
&\mu \bigl(\vepsi(\Theta^{n+1}_{\bu}), \vepsi(\bv_h) \bigr)-\bigl( \Theta_\xi^{n+1}, \div \bv_h \bigr)
= (\Lambda_\xi^{n+1}, \div \bv_h),
&&\quad\forall \bv_h\in V_h, \\
&\kappa_3\bigl(\Theta_\xi^{n+1}, \varphi_h \bigr) +\bigl(\div\Theta^{n+1}_{\bu}, \varphi_h \bigr) 
=\kappa_1 \bigl(\Theta_\eta^{n+\theta}, \varphi_h \bigr) \label{e3.21}\\
&\hskip 0.8in
-\bigl( \div \Lambda_{\bu}^{n+1}, \varphi_h \bigr)
+\Delta t\bigl(  d_t\eta(t_{n+1}), \varphi_h  \bigr)
&&\quad\forall \varphi_h \in M_h,\notag \\
&\bigl(d_t\Theta_\eta^{n+1}, \psi_h \bigr) +\frac{K}{\mu_f} \bigl(\nab (\kappa_1\Theta_\xi^{n+1} 
+\kappa_2\Theta_\eta^{n+1})-\rho_f\bg, \nab \psi_h \bigr)  &&\label{e3.23}\\
&\hskip 1.5in
=\bigl( R^{n+1}_h, \psi_h \bigr) &&\quad\forall \psi_h \in W_h, \no\\
&E_{\bu}^0=0,\quad E_\xi^0=0, \quad E_\eta^{-1}=0. &&\label{e3.24}
\end{alignat}

\eqref{e3.15-00} follows from setting $\bv_h=d_t\Theta^{n+1}_{\bu}$ in \reff{e3.20}, $\varphi_h=\Theta^{n+1}_\xi$ 
(after applying the difference operator $d_t$ to the equation \reff{e3.21}), 
$\psi_h=\hat{\Theta}_p^{n+1}=\kappa_1\Theta_\xi^{n+1} +\kappa_2\Theta_\eta^{n+\theta}$ in \reff{e3.23},
adding the resulting equations, and applying the summation operator $\Delta t\sum^{\ell}_{n=1}$ to both sides. 
\end{proof}

\begin{theorem}\label{thm1301}
Let $\{(u_h^{n}, \xi_h^{n}, \eta_h^{n})\}_{n\geq 0}$ be defined by (FEA), then 
there holds the error estimate for $\ell \leq N$
\begin{align}\label{e131130-1}
&\max_{0\leq n\leq \ell}\bigg[ \sqrt{\mu}\|\varepsilon(\Theta_{\bu}^{n+1})\|_{L^2(\Ome)}
+\sqrt{\kappa_2}\|\Theta_\eta^{n+\theta}\|_{L^2(\Ome)}+\sqrt{\kappa_3}\|\Theta_\xi^{n+1}\|_{L^2(\Ome)}\bigg] \\
&\hskip 1.2in
+\bigg[\Delta t\sum_{n=0}^\ell\frac{K}{\mu_f} 
\|\hat{\Theta}_p^{n+1}\|_{L^2}^2\bigg]^\frac{1}{2} 
\leq C_1(T)\Delta t+C_2(T)h^2,\no
\end{align}
provided that $\Delta t =O(h^2)$ when $\theta=0$ and $\Delta t >0$ when $\theta=1$. Where 
\begin{align}
C_1(T)&= C\|q_t\|_{L^2((0, T); L^2(\Ome))}^2+ C\|(q)_{tt}\|_{L^2((0, T); H^{-1}(\Ome))},\\
C_2(T)&=C\|\xi\|_{L^{\infty}((0,T);H^2(\Ome))} +C\|\xi_t\|_{L^2((0,T);H^2(\Ome))} \\
&\hskip 1.4in
+C\|\div({\bu})_t\|_{L^2((0,T);H^2(\Ome))}. \no
\end{align}
\end{theorem}

\begin{proof}
To derive the above inequality, we need to bound each term on the right-hand side of \reff{e3.15-00}. 
Using the fact $\Theta_{\bu}^0=\mathbf{0}$, $\Theta_\xi^0=0$ and $\Theta_\eta^{-1}=0$, we have
\begin{align}\label{e131204-00}
&\mathcal{E}_h^\ell +\Delta t\sum_{n=1}^\ell\bigg[\frac{K}{\mu_f} \bigl(\nabla\hat{\Theta}_p^{n+1}-\rho_f\bg, 
\nabla\hat{\Theta}_p^{n+1} \bigr)&&\\
&\qquad
+\frac{\Delta t}{2}\Bigl(\mu\| d_t\varepsilon(\Theta_{\bu}^{n+1}) \|_{L^2(\Ome)}^2
+\kappa_2\| d_t\Theta_\eta^{n+\theta} \|_{L^2(\Ome)}^2+\kappa_3\| d_t\Theta_\xi^{n+1} \|_{L^2(\Ome)}^2 \Bigr) \bigg]\no\\
&=(\Delta t)^2\sum_{n=1}^\ell\bigl( d_t^2 \eta(t_{n+1}), \Theta_\xi^{n+1} \bigr)
+\Delta t\sum_{n=1}^\ell \bigl( R_h^{n+1}, \hat{\Theta}_p^{n+1} \bigr)
+\frac{\mu}{2} \|\varepsilon(\Theta_{\bu}^{1})\|_{L^2(\Ome)}^2 \no\\
&\qquad
+\Delta t\sum_{n=1}^\ell \Bigl[\bigl( \Lambda_\xi^{n+1}, \div d_t \Theta_{\bu}^{n+1}\bigr)
-\bigl( \div d_t \Lambda_{\bu}^{n+1}, \Theta_\xi^{n+1} \bigr) \Bigr] \no\\
&\qquad
+(1-\theta)(\Delta t)^2\sum_{n=1}^\ell\frac{K\kappa_1}{\mu_f} \bigl(d_t\nab\Theta_{\xi}^{n+1}, \nab\hat{\Theta}_p^{n+1}\bigr),
\no
\end{align}

We now estimate each term on the right-hand side of \reff{e131204-00}. To bound the first term on the 
right-hand side of \reff{e131204-00}, we first use the summation by parts formula and $d_t\eta_h(t_0)=0$ to get
\begin{align}\label{e131209-00}
\sum_{n=0}^\ell \bigl( d_t^2 \eta(t_{n+1}), \Theta_\xi^{n+1} \bigr)
=\frac{1}{\Delta t}\bigl( d_t \eta(t_{l+1}),  \Theta_\xi^{l+1}\bigr)
-\sum_{n=1}^\ell \bigl( d_t \eta(t_{n}), d_t\Theta_\xi^{n+1} \bigr).
\end{align}

Now, we bound each term on the right-hand side of \reff{e131209-00} as follows:
\begin{align}\label{e131209-01}
&\frac{1}{\Delta t}\bigl( d_t \eta(t_{\ell+1}), \Theta_\xi^{\ell+1}\bigr)
\leq\frac{1}{\Delta t}\|d_t \eta(t_{\ell+1})\|_{L^2(\Ome)}\| \Theta_\xi^{\ell+1}\|_{L^2(\Ome)}\\
&\quad
\leq \frac{1}{\Delta t}\|\eta_t\|_{L^2((t_\ell, t_{\ell+1}); L^2(\Ome))}\cdot \frac{1}{\beta_1}
\sup_{\bv_h\in\bV_h}\frac{\mu\bigl( \varepsilon(\Theta_{\bu}^{\ell+1}), \varepsilon(\bv_h)  \bigr)
-(\Lambda_{\xi}^{\ell+1},\div \bv_h)}{\|\nabla \bv_h\|_{L^2(\Ome)}}\no\\
&\quad
\leq \frac{C\mu}{\beta_1\Delta t}\|\eta_t\|_{L^2((t_\ell, t_{\ell+1}); L^2(\Ome))}
\Bigl[\|\varepsilon(\Theta_{\bu}^{\ell+1})\|_{L^2(\Ome)}+\frac{1}{\mu}\|\Lambda_{\xi}^{\ell+1}\|_{L^2(\Ome)}\Bigr]\no\\
&\quad
\leq \frac{\mu}{4(\Delta t)^2}  \|\varepsilon(\Theta_{\bu}^{\ell+1})\|_{L^2(\Ome)}^2
+\frac{C\mu}{\beta_1^{2}} \|\eta_t\|_{L^2((t_\ell, t_{\ell+1}); L^2(\Ome))}^2
+\frac{C}{\beta_1^{2}}\|\Lambda_{\xi}^{\ell+1}\|_{L^2(\Ome)}^2,\no\\
&\sum_{n=1}^\ell\bigl( d_t \eta(t_{n}), d_t\Theta_\xi^{n+1} \bigl)
\leq \sum_{n=1}^\ell\| d_t \eta(t_{n})\|_{L^2(\Ome)}\| d_t\Theta_\xi^{n+1}\|_{L^2(\Ome)}\\
&\quad
\leq \sum_{n=1}^\ell|| d_t \eta(t_{n})\|_{L^2(\Ome)}\cdot\frac{1}{\beta_1}
\sup_{\bv_h\in\bV_h}\frac{\mu\bigl( d_t\varepsilon(\Theta_{\bu}^{n+1}), \varepsilon(\bv_h)  \bigr)
-(d_t\Lambda_{\xi}^{n+1},\div \bv_h)}{\|\nabla \bv_h\|_{L^2(\Ome)}}\no\\
&\quad
\leq \frac{C\mu}{\beta_1} \sum_{n=1}^\ell \| d_t \eta(t_{n})\|_{L^2(\Ome)}
\Bigl[ \| d_t \varepsilon(\Theta_{\bu}^{n+1})\|_{L^2(\Ome)}+\frac{1}{\mu}\|d_t\Lambda_{\xi}^{n+1}\|_{L^2(\Ome)}  \Bigr]\no\\
&\quad
\leq \sum_{n=1}^\ell \Bigl[ \frac{\mu}{4}\| d_t \varepsilon(\Theta_{\bu}^{n+1})\|_{L^2(\Ome)}^2
+\frac{C}{\beta_1^2}\|d_t\Lambda_{\xi}^{n+1}\|_{L^2(\Ome)} \Bigr]
+\frac{C\mu}{\beta_1^2} \|\eta_t\|_{L^2((0,T);L^2(\Ome))}^2.\no
\end{align}
The second term on the right-hand side of \reff{e131204-00} can be bounded as
\begin{align}
\bigl|\bigl( R_h^{n+1}, \hat{\Theta}_p^{n+1} \bigr) \bigr|
&\leq \|R_h^{n+1}\|_{H^{-1}(\Ome)}\|\nabla\hat{\Theta}_p^{n+1}\|_{L^2(\Ome)} \label{e131209-02}\\
&\leq \frac{K}{4\mu_f}\|\nabla\hat{\Theta}_p^{n+1}\|_{L^2(\Ome)}^2
        +\frac{\mu_f}{K}\|R_h^{n+1}\|_{H^{-1}(\Ome)}^2\no\\
&\leq \frac{K}{4\mu_f}\|\nabla\hat{\Theta}_p^{n+1}\|_{L^2(\Ome)}^2
        +\frac{\mu_f\Delta t}{3K}\|\eta_{tt}\|_{L^2((t_{n}, t_{n+1}); H^{-1}(\Ome))}^2,\no
\end{align}
where we have used the fact that
\begin{align*} 
\|R_h^{n+1}\|_{H^{-1}(\Ome)}^2 \leq\frac{\Delta t}{3}\int_{t_{n}}^{t_{n+1}} \|\eta_{tt}\|_{H^{-1}(\Ome)}^2\,dt.
\end{align*}

The fourth term on the right-hand side  of \reff{e131204-00} can be bounded by
\begin{align}\label{e141104-01}
&\Delta t\sum_{n=1}^\ell \Bigl[ \bigl( \Lambda_\xi^{n+1}, \div d_t \Theta_{\bu}^{n+1}\bigr)
-\bigl( \div d_t \Lambda_{\bu}^{n+1}, \Theta_\xi^{n+1} \bigr) \Bigr]\\
&\quad
=( \Lambda_\xi^{\ell+1}, \div\Theta_{\bu}^{\ell+1}) -\Delta t\sum_{n=1}^\ell \Bigr[\bigl( d_t\Lambda_\xi^{n+1}, 
\div\Theta_{\bu}^{n}\bigr) 
+\bigl( \div d_t \Lambda_{\bu}^{n+1}, \Theta_\xi^{n+1} \bigr) \Bigr]\notag\\
&\quad
\leq \frac12\|\Lambda_\xi^{\ell+1}\|_{L^2(\Ome)}^2 +\frac12\|\div \Theta_{\bu}^{\ell+1}\|_{L^2(\Ome)}^2 
+\frac12\Delta t\sum_{n=1}^\ell \Bigl[ \|d_t\Lambda_\xi^{n+1}\|_{L^2(\Ome)}^2 \notag\\
&\hskip 0.6in
+ C\|\varepsilon(\Theta_{\bu}^{n+1})\|_{L^2(\Ome)}^2 + \|\div d_t \Lambda_{\bu}^{n+1}\|_{L^2(\Ome)}^2
+ \|\Theta_\xi^{n+1}\|_{L^2(\Ome)}^2 \Bigr],\notag
\end{align}
here we have used Korn's inequality 
\begin{align*}
\|\div \Theta_{\bu}^{n+1}\|_{L^2(\Ome)} \leq C\|\varepsilon(\Theta_{\bu}^{n+1})\|_{L^2(\Ome)}.
\end{align*}

When $\theta=0$ we also need to bound the last term on the right-hand side of \reff{e131204-00}, 
which is carried out below.
\begin{align}\label{e141104-02}
&\sum_{n=1}^\ell \bigl(d_t\nabla\Theta_{\xi}^{n+1}, \nabla\hat{\Theta}_p^{n+1} \bigr)
\leq\sum_{n=1}^\ell\|d_t\Theta_{\xi}^{n+1}\|_{L^2(\Ome)}\|\nabla\hat{\Theta}_p^{n+1}\|_{L^2(\Ome)}\\
&\,
\leq\sum_{n=1}^\ell \sup_{\bv_h\in \bV_h}
\frac{\mu\bigl( d_t\varepsilon(\Theta_{\bu}^{n+1}), \varepsilon(\bv_h)  \bigr)
-\bigl(d_t\Lambda_{\xi}^{n+1},\div \bv_h\bigl)}{\|\nabla \bv_h\|_{L^2(\Ome)}}
\,\|\nabla\hat{\Theta}_p^{n+1}\|_{L^2(\Ome)} \no \\
&\,
\leq \sum_{n=1}^\ell \Bigl[ \frac{\mu^2\kappa_1\Delta t}{h^2\beta_1^2} \|d_t\varepsilon(\Theta_{\bu}^{n+1})\|_{L^2}^2 
+\frac{\kappa_1\Delta t}{h^2\beta_1^2} \|d_t\Lambda_{\xi}^{n+1}\|_{L^2}
+\frac{\kappa_1^{-1}}{4\Delta t}\|\nabla\hat{\Theta}_p^{n+1}\|_{L^2(\Ome)}^2 \Bigr].\no
\end{align}

Substituting \reff{e131209-00}--\reff{e141104-02} into \reff{e131204-00} and rearranging terms we get
\begin{align}\label{e131209-04}
&\mu \|\varepsilon(\Theta_{\bu}^{\ell+1})\|_{L^2(\Ome)}^2
+\kappa_2\|\Theta_\eta^{\ell+\theta}\|_{L^2(\Ome)}^2+\kappa_3\|\Theta_\xi^{\ell+1}\|_{L^2(\Ome)}^2 \\
&\hskip 0.5in
+\Delta t \sum_{n=1}^\ell\frac{K}{\mu_f} \|\nabla\hat{\Theta}_p^{n+1} \|_{L^2(\Ome)}^2  \no \\
&\quad
\leq \frac{4\mu_f(\Delta t)^2}{\kappa} \|\eta_{tt}\|_{L^2((0, T); H^{-1})}^2
+\frac{4\mu(\Delta t)^2}{\beta_1^2} \|\eta_t\|_{L^2((0,T);L^2(\Ome))}^2, \no\\
&\hskip 0.5in
+\|\Lambda_\xi^{\ell+1}\|_{L^2(\Ome)}^2
+\Delta t\sum_{n=1}^\ell \|d_t\Lambda_\xi^{n+1}\|_{L^2(\Ome)}^2
+\Delta t\sum_{n=1}^\ell \|\div d_t \Lambda_{\bu}^{n+1}\|_{L^2(\Ome)}^2, \no
\end{align}
provide that  $\Delta t\leq \mu_f\beta_1^2(4\mu\kappa_1^2K)^{-1} h^2$ when $\theta=0$, but
it holds for all $\Delta t>0$ when $\theta=1$. Hence, \reff{e131130-1} follows from using the 
approximation properties of the projection operators $\mathcal{Q}_h, \mathcal{R}_h$ and 
$\mathcal{S}_h$. The proof is complete.
\end{proof}

We conclude this section by stating the main theorem of the section.

\begin{theorem}\label{thm3.5}
The solution of the (FEA) satisfies the following error estimates:
\begin{align}\label{e131209-05}
&\max_{0\leq n\leq N} \Bigl[ \sqrt{\mu} \|\nabla(u(t_n)-u_h^n)\|_{L^2(\Ome)}
+\sqrt{\kappa_2} \|\eta(t_n)-\eta_h^n\|_{L^2(\Ome)}\\
&\hskip .5in
+\sqrt{\kappa_3} \|\xi(t_n)-\xi_h^n\|_{L^2(\Ome)} \Bigr]
\leq \widehat{C}_1(T) \Delta t +\widehat{C}_2(T)h^2.\no \\
&\bigg[ \Delta t \sum_{n=0}^N \frac{K}{\mu_f} \|\nabla(p(t_n)-p_h^n) \|_{L^2(\Ome)}^2 \bigg]^{\frac12}
\leq \widehat{C}_1(T) \Delta t +\widehat{C}_2(T)h, \label{E_p}
\end{align}
provided that $\Delta t=O(h^2)$ when $\theta=0$ and $\Delta t>0$ when $\theta=1$. Where 
\begin{align*}
\widehat{C}_1(T)&:=C_1(T),\\
\widehat{C}_2(T)&:=C_2(T)+\|\xi\|_{L^{\infty}((0,T);H^2(\Ome))}+\|\eta\|_{L^{\infty}((0,T);H^2(\Ome))} \\
&\hskip 1.85in
+\|\nabla\bu\|_{L^{\infty}((0,T);H^2(\Ome))}.
\end{align*}
\end{theorem}

\begin{proof}
The above estimates follow immediately from an application of the triangle inequality on
\begin{alignat*}{2}
\bu(t_n)-\bu_h^n &=\Lambda_{\bu}^n+\Theta_{\bu}^n, 
&&\qquad \xi(t_n)-\xi_h^n =\Lambda_{\xi }^n+\Theta_{\xi }^n, \\
\eta(t_n)-\eta_h^n &=\Lambda_{\eta }^n+\Theta_{\eta }^n, 
&&\qquad p(t_n)-p_h^n =\hat{\Lambda}_p^n + \hat{\Theta}_p^n.
\end{alignat*}
and appealing to \reff{e3.13-02}, \reff{e3.13-07} and Theorem \ref{thm1301}.
\end{proof}

\section{Numerical experiments}\label{sec-4}
In this section we shall present three $2$-dimensional numerical experiments to validate theoretical 
results for the proposed numerical methods, to numerically examine 
the performances of the approach and methods as well as to compare them with existing methods
in the literature on two benchmark problems. One of these two problems was used 
to demonstrate the ``locking phenomenon" in \cite{pw09}. Our numerical result shows that
such a ``locking phenomenon" phenomenon does not occur in our numerical methods, it confirms 
the fact that our approach and methods have a built-in mechanism to prevent the ``locking phenomenon".

\medskip
{\bf Test 1.} Let $\Omega=[0,1]\times [0,1]$, $\Gamma_1=\{(1,x_2); 0\leq x_2\leq 1\}$, 
$\Gamma_2=\{(x_1,0); 0\leq x_1\leq 1\}$,  $\Gamma_3=\{(0,x_2); 0\leq x_2\leq 1\}$,
$\Gamma_4=\{(x_1,1); 0\leq x_1\leq 1\}$, and $T=0.001$. We consider problem \eqref{e2.6}--\eqref{e2.9} 
with following source functions:
\begin{align*}
\mathbf{f} &=-(\lambda+\mu)t(1,1)^T+\alpha \cos(x_1+x_2)e^t(1,1)^T,\\
\phi &=\Bigl(c_0+\frac{2\kappa}{\mu_f} \Bigr)\sin(x_1+x_2)e^t+\alpha(x_1+x_2),
\end{align*}
and the following boundary and initial conditions:
\begin{alignat*}{2}
p &= \sin(x_1+x_2)e^t  &&\qquad\mbox{on }\partial\Omega_T,\\
u_1 &= \frac12 x_1^2t &&\qquad\mbox{on }\Gamma_j\times (0,T),\, j=1,3,\\
u_2 &= \frac12 x_2^2t &&\qquad\mbox{on }\Gamma_j\times (0,T),\, j=,2,4,\\
\sigma\bf{n}-\alpha \emph{p}\bf{n} &= \mathbf{f}_1, &&\qquad \mbox{on } \p\Ome_T;\\
\mathbf{u}(x,0) = \mathbf{0},  \quad p(x,0) &=\sin(x_1+x_2) &&\qquad\mbox{in } \Ome,
\end{alignat*}
where
\[
\mathbf{f}_1(x,t)=\mu(x_1n_1,x_2n_2)^T t + \lambda(x_1+x_2) (n_1,n_2)^T t -\alpha\sin(x_1+x_2)(n_1,n_2)^T e^t.  \]
It is easy to check that the exact solution for this problem is  
\[
\mathbf{u}(x,t)=\frac{t}2 \bigl( x_1^2, x_2^2 \bigr)^T,\qquad p(x,t)=\sin(x_1+x_2)e^t.
\]
We note that the boundary conditions used above are not pure Neumann conditions, 
instead, they are mixed Dirichlet-Neumann conditions.  As pointed out in Remark \ref{rem-2.1} (c),
the approach and methods of this paper also apply to this case, the only change is to
replace the test and trial space $\bH^1_\perp(\Ome)$ by $\bH^1(\Ome)$ with some appropriately built-in 
Dirichlet boundary condition in Definition \ref{weak2}.

The goal of doing this test problem is to compute the order of the exact errors and to 
show that the theoretical error bounds proved in the previous section are sharp. 

Table \ref{tab1} displays the computed $L^\infty(0,T; L^2(\Ome))$ and $L^2(0,T;H^1(\Ome))$-norm errors 
and the convergence rates with respect to $h$ at the terminal time $T$. In the test,  
$\Delta t=10^{-5}$ is used so that the time error is negligible. Evidently, the spatial rates of 
convergence are consistent with that proved in the convergence theorem.   
\begin{table}[htb]
\begin{center}
\begin{tabular}{|l|c|c|c|c|}
\hline
& $L^\infty(L^2)$ error & $L^\infty(L^2)$ order& $L^2(H^1)$ error& $L^2(H^1)$ order\\ \hline
$h=0.16$ & 2.0789e-3 & & 5.5045e-2 & \\ \hline
$h=0.08$ & 5.9674e-4 &1.8006 &  2.9431e-2&0.9032 \\ \hline
$h=0.04$ & 1.6227e-4 &1.8787&  1.5332e-2&0.9408 \\ \hline
$h=0.02$ & 4.0971e-5 & 1.9857&  7.6968e-3& 0.9942\\ \hline
\end{tabular}
\smallskip
\caption{Spatial errors and convergence rates of Test 1.} \label{tab1}
\end{center}
\end{table}

Figures \ref{figure_p3} and \ref{figure_p4_add1} show respectively the surface plot of the computed 
pressure $p$ at the terminal time $T$ and the color plot of both the computed pressure $p$ and 
displacement $\mathbf{u}$ with mesh parameters $h=0.02$ and $\Delta t=10^{-5}$. They coincide with
the exact solution on the same space-time resolution.

\begin{figure}[th]
\centering
\includegraphics[height=2.5in,width=2.8in]{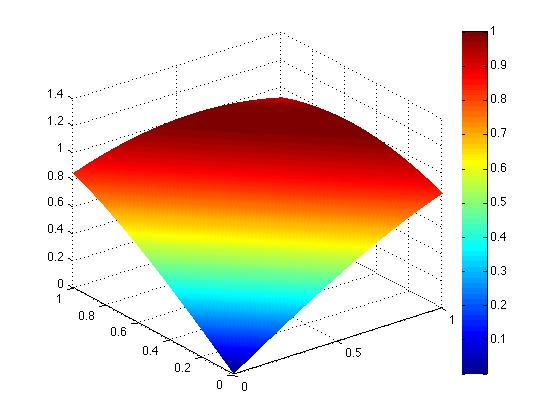}
\caption{Test 1: Surface plot of the Computed pressure $p$ at the terminal time $T$.}\label{figure_p3}
\end{figure}


\begin{figure}[th]
\centering
\includegraphics[height=2.5in,width=2.8in]{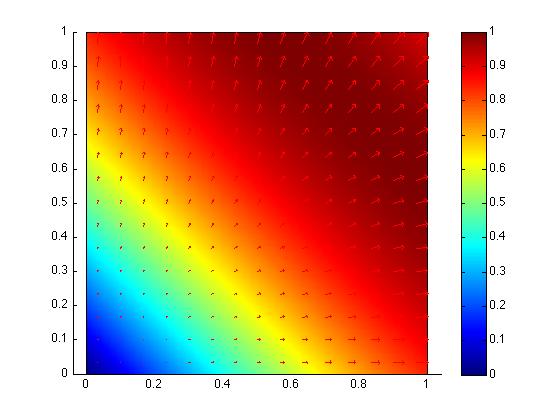}
\caption{Test 1: Computed pressure $p$ (color plot) and displacement $\mathbf{u}$ (arrow plot) at $T$.}
\label{figure_p4_add1}
\end{figure}

%
%

\medskip
{\bf Test 2.} In this test we consider so-called Barry-Mercer's problem, which is a Benchmark test 
problem for the poroelasticity model \eqref{e2.6}--\eqref{e2.9} (cf. \cite{pw07b,pw09} and the references 
therein). Again, $\Ome=[0,1]\times [0,1]$ but $T=1$. Barry-Mercer's problem assumes no source, that is, 
$\mathbf{f}\equiv 0$ and $\phi\equiv 0$, and takes the following boundary conditions:
\begin{alignat*}{2}
p &= 0 &&\qquad\mbox{on }\Gamma_j\times (0,T),\, j=1,3,4,\\
p &= p_2 &&\qquad\mbox{on }\Gamma_j\times (0,T),\, j=2,\\
u_1 &= 0 &&\qquad\mbox{on }\ \Gamma_j\times (0,T),\, j=1,3,\\
u_2 &= 0 &&\qquad\mbox{on }\ \Gamma_j\times (0,T),\, j=2,4,\\
\sigma\bf{n}-\alpha p\bf{n} &= \mathbf{f}_1 :=(0,\alpha p)^T &&\qquad\mbox{on }\partial\Omega_T,
\end{alignat*}
where
\[
p_2(x_1,t) 
=\begin{cases} 
\sin t &\quad\mbox{when } x\in [0.2,0.8)\times (0,T), \\
0 &\quad\mbox{others}. 
\end{cases}
\]

The boundary segments $\Gamma_j, j=1,2,3,4$, which are defined in {\bf Test 1}, 
and the above boundary conditions are depicted in Figure \ref{figure_p5}. Also, the initial conditions
for Barry-Mercer's problem are $\mathbf{u}(x,0) \equiv \mathbf{0}$ and $p(x,0)\equiv 0$.
We remark that Barry-Mercer's problem has a unique solution which is given by an infinite 
series (cf. \cite{pw09}).
\begin{figure}[th]
\centering
\includegraphics[height=2.5in,width=2.8in]{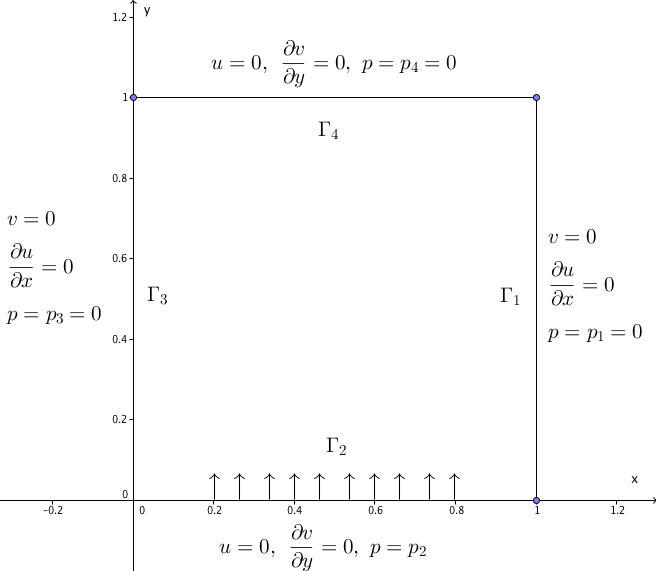}
\caption{Test 2: boundary conditions.}\label{figure_p5}
\end{figure}

Figures \ref{figure_p6} and \ref{figure_p8} display respectively the computed pressure $p$ (surface plot)
and the computed displacement $\mathbf{u}$ (arrow plot). We note that the arrows near the boundary 
match very well with those on the boundary. Our numerical solution approximates the 
exact solution of Barry-Mercer's problem very well and does not produce any oscillation 
in computed pressure.
\begin{figure}[th]
\centering
\includegraphics[height=2.5in,width=2.8in]{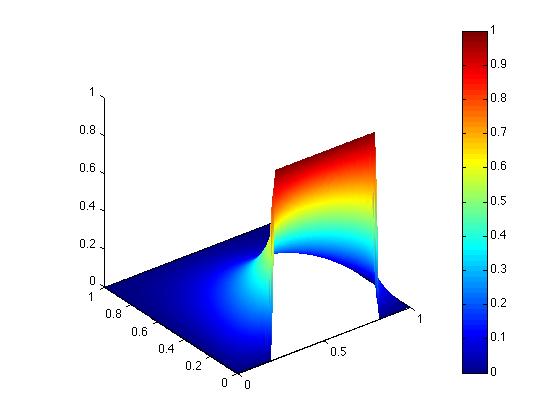}
\caption{Test 2: Surface plot of the computed pressure $p$ at the terminal time $T$.}\label{figure_p6}
\end{figure}


\begin{figure}[th]
\centering
\includegraphics[height=2.5in,width=2.8in]{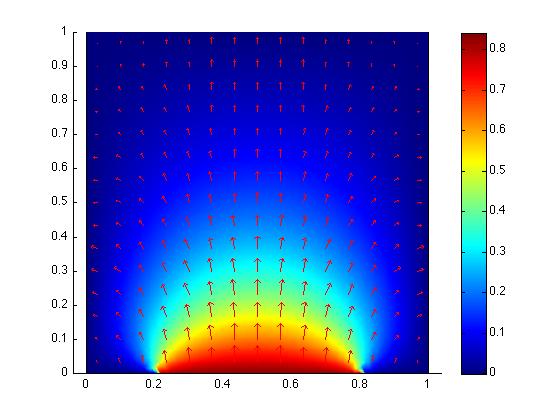}
\caption{Test 2: Computed pressure $p$ (color plot) and displacement (arrow plot) at $T$.}\label{figure_p8}
\end{figure}

\medskip
{\bf Test 3.} This test problem is taken from \cite{pw09}. Again, we consider problem 
\eqref{e2.6}--\eqref{e2.9} with $\Ome=[0,1]\times [0,1]$. Let $\Gamma_j$ be same as in {\bf Test 1}
and $c_0=0, E=10^5, \nu=0.4, \mu=35714$ and $T=0.001$. There is no source, that is,
$\mathbf{f}\equiv 0$ and $\phi\equiv 0$. The boundary conditions are taken as  
\begin{alignat*}{2}
-\frac{\kappa}{\mu_f}(\nabla p-\rho_f\mathbf{g})\cdot\mathbf{n} &= 0 &&\qquad \mbox{on }\p\Ome_T,\\
\mathbf{u} &=\mathbf{0} &&\qquad \mbox{on }\Gamma_3\times (0,T),\\
\sigma\bf{n}-\alpha p\bf{n} &= \mathbf{f}_1 &&\qquad\mbox{on }\Gamma_j\times (0,T),\, j-1,2,4,
\end{alignat*}
where  $\mathbf{f}_1=(f_1^1, f_1^2)$ and
\[
f_1^1\equiv 0 \quad \mbox{on }\p\Ome_T, \qquad
f_1^2 = \begin{cases}
      0 &\quad\mbox{on } \Gamma_j\times (0,T),\, j=1,2,3,\\
      -1 &\quad\mbox{on } \Gamma_4\times (0,T).
      \end{cases}
\]
The computational domain $\Ome$ and the above boundary conditions are depicted in Figure \ref{test1_1}.
Also, the zero initial conditions are assigned for both $\mathbf{u}$ and $p$ in this test.
\begin{figure}[th]
\centering
\includegraphics[height=2.5in,width=2.8in]{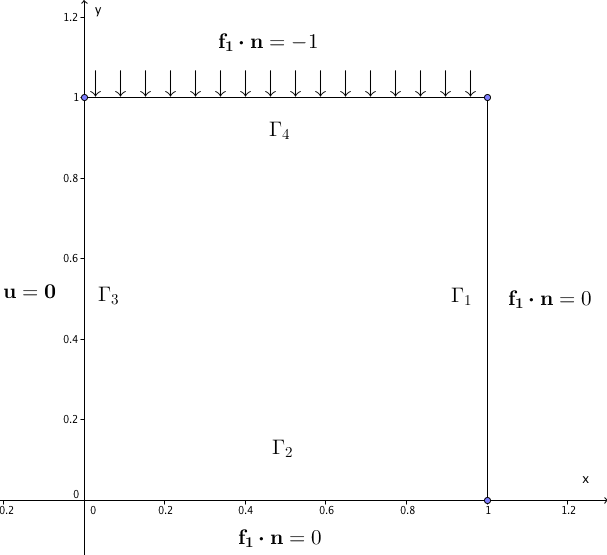}
\caption{Test 3: boundary conditions}\label{test1_1}
\end{figure}

Figures \ref{test1_2}--\ref{test1_4} display respectively the surface and color plot
of the computed pressure, the arrow plot of the displacement vector, and the deformation 
of the whole $\Ome$. There is no oscillation in the computed pressure and the arrows 
near the boundary match very well with arrows on the boundary.
\begin{figure}[th]
\centering
\includegraphics[height=2.4in,width=2.4in]{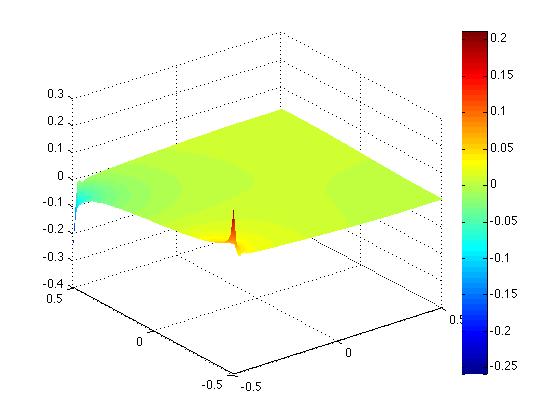}
\includegraphics[height=2.4in,width=2.4in]{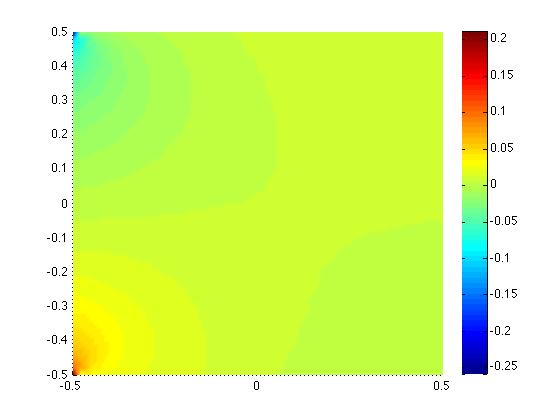}
\caption{Test 3: Computed pressure $p$: surface plot (left) and color plot (right).}\label{test1_2}
\end{figure}


\begin{figure}[th]
\centering
\includegraphics[height=2.4in,width=2.4in]{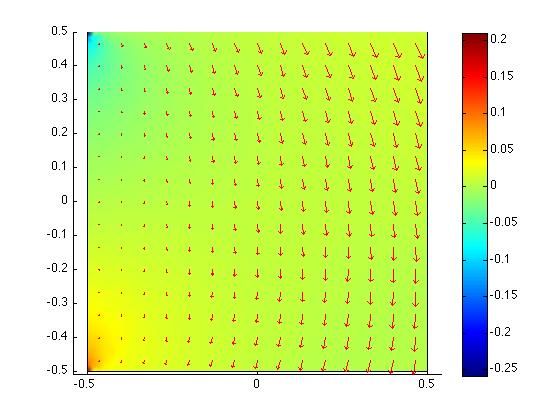}
\includegraphics[height=2.4in,width=2.4in]{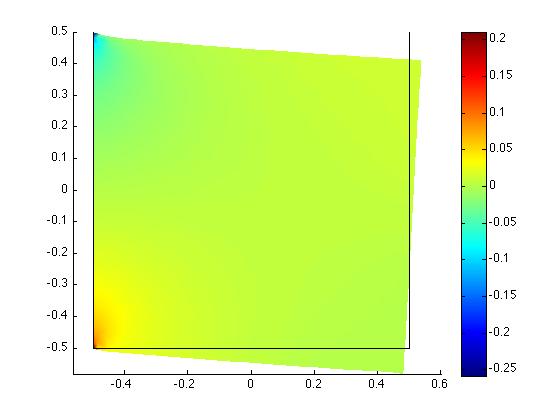}
\caption{Test 3: Arrow plot of the computed displacement (left) and deformation of $\Ome$ (right).}\label{test1_4}
\end{figure}


We remark that the ``locking phenomenon" was observed in the simulation of \cite{pw09} at $T=0.001$
for this problem, namely, the computed pressure exhibits some oscillation at $T=0.001$. 
The reason for the locking phenomenon was explained as follows: when time step $\Delta t$ is small, the 
displacement vector $\mathbf{u}$ is almost divergence free in the short time while the numerical solution 
does not observe this nearly divergence free property, which results in the locking. However, at later times 
the displacement vector is no longer divergence free, so no locking exists at later times.

It is clear that our numerical solution does not exhibits the locking phenomenon at $T=0.001$. 
This is because our multiphysics reformulation weakly imposes the condition $\mbox{div } \mathbf{u}=q$, 
hence, $\mathbf{u}$ automatically becomes nearly divergence free when $q\approx 0$ for $0<t<<1$.
Moreover, the pressure $p$ is not a primary variable anymore in our reformulation, instead, 
$p$ becomes a derivative variable and it is computed using the new primary variables $\xi$ and $\eta$.
Therefore, our numerical methods are insensitive to the regularity of $p$.



\begin{thebibliography}{99}
\bibitem{ber} J.~Bercovier and O. Pironneau, {\em
Error estimates for finite element solution of the Stokes problem in
the primitive variables}, Numer. Math., 33, pp. 211-224, (1979).

\bibitem{biot} M. Biot, {\em Theory of elasticity and consolidation for
a porous anisotropic media}, J. Appl. Phys. 26, pp. 182--185 (1955).


\bibitem{brenner} S. C. Brenner, {\em A nonconforming mixed multigrid method
for the pure displacement problem in planar linear elasticity},
SIAM J. Numer. Anal., 30, pp. 116--135 (1993).

\bibitem{bs08} S. C. Brenner and L. R. Scott, {\em The Mathematical
Theory of Finite Element Methods},  third edition, Springer, 2008.

\bibitem{brezzi} F.~Brezzi and M.~Fortin, {\em Mixed and
Hybrid Finite Element Methods}, Springer, New York, 1992.

\bibitem{cia} P.G.~Ciarlet, {\em The Finite Element Method for Elliptic
Problems}, North-Holland, Amsterdam, 1978.

\bibitem{coussy04} O. Coussy, {\em Poromechanics}, Wiley \& Sons, 2004.


\bibitem{dauge88} M. Dauge, {\em  Elliptic Boundary Value Problems on Corner
Domains}, Lecture Notes in Math., vol. 1341, Springer-Verlag, Berlin, 1988.



\bibitem{de86}
M. Doi and S. F. Edwards, {\em The Theory of Polymer Dynamics},
Clarendon Press, Oxford, 1986.

\bibitem{dautray} R. Dautray and J. L. Lions, {\em Mathematical Analysis
and Numerical Methods for Science and Technology. Vol. 1}, Springer-Verlag, 1990.

\bibitem{fh10} X. Feng and Y. He, {\em Fully discrete finite element
approximations of a polymer gel model}, SIAM J. Numer. Anal. 48,
pp. 2186--2217 (2010).

\bibitem{GT} D.~Gilbarg, N.S.~Trudinger, {\em Elliptic
Partial Differential Equations of Second Order}, Second Edition,
Springer, New York, 2000.

\bibitem{gra} V.~Girault and P.A.~Raviart, {\em
Finite Element Method for Navier-Stokes Equations: theory and algorithms},
Springer-Verlag, Berlin, Heidelberg, New York, 1981.



\bibitem{hamley07} I. Hamley, {\em Introduction to Soft Matter},
John Wiley \& Sons, 2007.

\bibitem{LSU} O. A. Lady\v zenskaja, V. A. Solonnikov and N. N. Uarlceva,
{\em  Linear and quasilinear equations of parabolic type},
Translations of Mathematical Monographs, Vol. 23, American
Mathematical Society, Providence, R.I., 1967.

\bibitem{pw09} P. J. Phillips and M. F. Wheeler, {\em Overcoming the problem of locking 
in linear elasticity and poroelasticity: an heuristic approach}. Comput. Geosci. 13, 
pp. 1--15, (2009).

\bibitem{pw07} P. J. Phillips and M. F. Wheeler, {\em A coupling of mixed
and continuous Galerkin finite element methods for poroelasticity I:
the continuous in time case}, Comput. Geosci. 11, pp. 131--144 (2007).

\bibitem{pw07b} P. J. Phillips and M. F. Wheeler, {\em A coupling of mixed
and continuous Galerkin finite element methods for poroelasticity II:
the discrete in time case}, Comput. Geosci. 11, pp. 145--158 (2007).

\bibitem{murad} M. A. Murad and A. F. D. Loula, {\em
Improved accuracy in finite element analysis of Biot's consolidation problem},
Comput. Methods in Appl. Mech. and Engr, 95, pp. 359--382 (1992).

\bibitem{roberts} J.E.~Roberts and J.M.~Thomas, {\em Mixed and
hybrid methods}, in Handbook of Numerical Analysis,
Vol. II,  North-Holland, New York,, pp.~523--639 (1991).

\bibitem{lynn07} L. Schreyer-Bennethum, {\em Theory of flow and deformation
of swelling porous materials at the macroscale}, Computers and Geotechnics,
34, pp. 267-278, 2007.


\bibitem{schowalter00} R. E. Schowalter, {\em Diffusion in poro-elastic media},
J. Math. Anal. 251, pp. 310--340 (2000).



\bibitem{tf79}
T. Tanaka and D. J. Fillmore, {\em Kinetics of swelling of gels},
J. Chem. Phys. 70, 1214 (1979).

\bibitem{terzaghi43} K. Terzaghi, {\em Theoretical Soil Mechanics},
John Wiley \& Sons, New York, 1943.

\bibitem{temam}  R.~Temam, {\em Navier-Stokes Equations},
Studies in Mathematics and its Applications, Vol. 2, North-Holland, 1977.


\bibitem{yd04b} T. Yamaue and M. Doi, {\em Swelling dynamics of constrained
thin-plate under an external force}, Phys. Rev. E 70, 011401 (2004).


\end{thebibliography}
\end{document}